\newtheorem{Theorem}{Theorem}[section]
\newtheorem{Lemma}[Theorem]{Lemma}
\newtheorem{Example}[Theorem]{Example}
\newtheorem{Remark}[Theorem]{Remark}
\def\V{\mbox{Var}}
\def\R\re
\def\V{\bf V}
\def \re{{\mathbb R}}
\def \H{{\mathbb H}}
\def \0{\lambda_{0}}
\begin{document}
\hyphenation{Ya-ma-be co-rres-pon-ding hy-po-the-sis iso-pe-ri-me-tric gen-er-at-ed}
\title{Isoperimetric estimates in low dimensional Riemannian products}

\author[J. M. Ruiz]{Juan Miguel Ruiz}
 \address{Juan Miguel Ruiz, ENES UNAM \\
           37684 \\
          Le\'on. Gto. \\
          M\'exico.}
\email{mruiz@enes.unam.mx}

\author[A. V. Juarez]{Areli Vazquez Juarez}

\address{Areli V\'azquez Juarez, ENES UNAM \\
	37684 \\
	Le\'on. Gto. \\
	M\'exico.}
\email{areli@enes.unam.mx}


\maketitle

\begin{abstract}  
Let $(T^k,h_k)=(S_{r_1}^1\times S_{r_2}^1 \times ... \times S_{r_k}^1, dt_1^2+dt_2^2+...+dt_k^2)$  be flat tori, $r_k\geq ...\geq r_2\geq r_1>0$ and $(\re^n,g_E)$ the Euclidean space with the flat metric. We compute  the isoperimetric profile of $(T^2\times \re^n, h_2+g_E)$, $2\leq n\leq 5$, for small    and big values of the volume. These computations give explicit lower bounds for the isoperimetric profile of $T^2\times\re^n$. We also note that similar estimates for $(T^k\times \re^n, h_k+g_E)$, $2\leq k\leq5$, $2\leq n\leq 7-k$, may be computed, provided estimates for $(T^{k-1}\times \re^{n+1}, h_{k-1}+g_E)$ exist. We compute this explicitly for $k=3$.  We use symmetrization techniques for product manifolds, based on work of A. Ros (\cite{Ros}) and F. Morgan (\cite{Morgan}).
	
\end{abstract}

\maketitle

\section{Introduction}

The isoperimetric problem is a classical question in differential geometry. An isoperimetric region of volume $t$, $0<t<V^n(M)$, in  a manifold $(M^n,g)$,  is a closed region $\Omega$ of volume $V^n(\Omega)=t$, such that its boundary area is minimal among the compact hypesurfaces $\Sigma \subset M$ enclosing a region of volume $t$. Throughout the article, the volume of a closed region $\Omega \subset M^n$ will mean $n-$dimensional Riemannian measure of $\Omega$ and  we will refer to them as $V^{n}(\Omega)$. On the other hand,  the area of a closed region $\Omega$ in the manifold $M^n$ will mean the $(n-1)-$dimensional Riemannian measure of $\partial \Omega$ and  and we will denote it by $V^{n-1}(\partial \Omega)$.
  
Given a Riemannian manifold $(M^n,g)$ of volume $V$, the isoperimetric function or isoperimetric profile of $(M,g)$ is the function $I_{(M,g)} : (0,V) \rightarrow (0,\infty )$ given
by 

$$I_{(M,g)} (t) = \inf \{ V^{n-1}( \partial U ) : V^n (U) = t , U \subset M^n, U \mathit{ \ a \ closed \  region} \} .$$

Note that the isoperimetric profile may be defined for manifolds of infinite volume. We will simply write $I_{M}$ when the metric $g$ is understood from context.  A  more detailed treatment of this subject may be found in  \cite{Ros}.

Although   a classical problem, the  isoperimetric profile is known for very few manifolds. It is known explicitly, for example, for space forms $(\re^n,g_E)$, $(S^n, g_0)$, $(\H^n,g_H)$, where $g_E, g_0$ and $g_H$ are the Euclidean, the round and the  hyperbolic metrics, respectively.  Other  examples include cylinders of the type $(S^n\times \re, g_0+dt^2)$ by the work of R. Pedrosa \cite{Pedrosa}, and for the Riemannian product of a low dimensional space form  with $S^1$, i.e., $(S^1\times \re^n,dt^2+g_E)$, $(S^1\times S^n, dt^2+g_0)$, $(S^1\times\H^n,dt^2+g_H)$ ($2 \leq n\leq 7$), by the work of R. Pedrosa and M. Ritor\'e \cite{Pedrosa2}.  Other results in this direction include lower bounds for isoperimetric profiles or characterizations of   isoperimetric regions, see for example, \cite{Morgan2}, \cite{Morgan3}, \cite{Petean0} and \cite{Petean3}. Nevertheless, for  many seemingly  simple products like $(S^2\times \re^2,g_0+g_E)$ or $(S^1\times S^1 \times \re^2,dt^2+ds^2+g_E)$, the explicit isoperimetric profiles are not known.

Let $(T^2,h_2)$ be a standard flat torus, $T^2=\re^2/\Gamma$, with $\Gamma$ the orthogonal lattice generated by  $\{(2 \pi r_1,0),(0,2 \pi r_2)\}$, $r_1,r_2 >0$. For example, the precise isoperimetric profile of  $(T^2\times \re ,h_2+dt^2)$ is not known, and is conjectured to be a profile generated by regions  such as spheres ($B^{3}_R$), cylinders ($B^{2}_R\times [0,r]$) and planes ($T^2\times [0,r]$), $R,r>0$. This conjecture was proven to be true for small volumes $0<v<v_1^*$, for some $v_1^*>0$ by the work of  L. Hauswirth, J. P\'erez and A. Ros, in \cite{Hauswirth}. Moreover, one may notice that the conjecture is also true for big volumes $v>v_1^{**}$, for some $v_1^{**}>0$, through an immediate application of the Ros product Theorem (\cite{Ros}, Theorem 3.7), and a comparison with the isoperimetric profile of $S^2\times \re$, computed by R. Pedrosa \cite{Pedrosa}.  More precisely, let $(T^2,h)$ be the flat torus with lattice generated by $\{(2\sqrt{ \pi},0),(0,2\sqrt{\pi})\}$. By direct computation
$V^2(T^2)=4\pi=V^2(S^2)$ and  $I_{S^2}\leq I_{T^2}$. Being $S^2$ a model metric, we may apply the Ros product Theorem to the inequality. This yields $I_{S^2\times \re}\leq I_{T^2\times \re}$.

  Now, let $B^{n}_R\subset \re^n$ denote a ball of radius $R>0$, and let $f_n:[0,\infty)\rightarrow [0,\infty)$ be the function given by 
	
	\begin{equation}
		\label{fv}
		f_n(v)=V^{n+1}(\partial(T^2\times B_R^{n})),
\end{equation}

\noindent  with $R$ such that  $v=V^{n+2}(T^2\times B^{n}_R)$. With this notation, since $T^2\times B^{1}_R$ are actual closed regions in $T^2\times \re$, one has

$$I_{S^2\times \re}(v)\leq I_{T^2\times \re}(v)\leq f_1(v).$$
Explicit computations of $I_{S^2\times \re}$ in the before cited work of R. Pedrosa \cite{Pedrosa}, show that for $v\geq v^{**}$, $I_{S^2\times \re}(v)= f_1(v)$, with $v^{**}\approx 16.66$.
It follows that $I_{T^2\times \re}(v)= f_1(v)$ for $v\geq v^{**}$.

 We may resume the above discussion in the following Theorem. 

\begin{Theorem} \normalfont{(Theorem 18 in \cite{Hauswirth}, together with \cite{Pedrosa} and  \cite{Ros})}
	
		\label{T2xR} 
	\emph{	Let $(T^2,h)$ be a standard flat torus, $T^2=\re^2/\Gamma$, with $\Gamma$ the orthogonal lattice generated by   $\{(2\sqrt{ \pi},0),(0,2\sqrt{\pi})\}$.
There are some $v_1^*, v_1^{**}>0$ such that the isoperimetric profile of $(T^2\times \re,h+dr^2)$ satisfies the following.	For $v<v_1^*$, $I_{T^2\times \re}(v)= I_{\re^3}(v)$ and for   $v>v_1^{**}$, $I_{T^2\times \re^n}(v)= f_1(v).$
	Explicit estimates are  $v_1^*=\frac{32 \pi ^{5/2}}{81}\approx 6.91$ and $v_1^{**}\approx16.66$.}
	
\end{Theorem}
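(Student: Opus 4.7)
Since the statement is attributed jointly to \cite{Hauswirth}, \cite{Pedrosa}, and \cite{Ros}, my plan is to assemble these three ingredients rather than prove anything from scratch; the argument splits cleanly at the two volume thresholds, and the paragraphs preceding the statement already sketch the scaffolding.

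For the small-volume regime I would simply invoke Theorem 18 of \cite{Hauswirth}: on $T^2 \times \re$ the isoperimetric regions for sufficiently small volumes are round Euclidean balls, so $I_{T^2 \times \re}(v) = I_{\re^3}(v)$ below an explicit threshold. The quantitative analysis there produces the value $v_1^* = 32\pi^{5/2}/81$, which I would quote.

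The large-volume regime is a two-sided squeeze against the slab profile $f_1$. The upper bound is a trivial competitor argument: $T^2 \times B_R^1$ is a closed region whose boundary $T^2 \times \{-R,R\}$ has area $2 V^2(T^2) = 8\pi$ for every $R>0$, giving $I_{T^2 \times \re}(v) \leq f_1(v)$. For the matching lower bound, the lattice is calibrated so that $V^2(T^2) = 4\pi = V^2(S^2)$, and a direct comparison of the two surface profiles—spherical caps $I_{S^2}(v) = \sqrt{v(4\pi - v)}$ against disks-then-strips on the square torus—yields $I_{S^2}(v) \leq I_{T^2}(v)$ on $(0, 4\pi)$. Since $(S^2, g_0)$ is a model metric, the Ros product theorem (Theorem 3.7 of \cite{Ros}) promotes this to $I_{S^2 \times \re}(v) \leq I_{T^2 \times \re}(v)$. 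Pedrosa's explicit computation in \cite{Pedrosa} identifies $I_{S^2 \times \re}(v) = 8\pi$, realized by slabs $S^2 \times [a,b]$, once $v \geq v^{**} \approx 16.66$. Chaining these with the upper bound forces $I_{T^2 \times \re}(v) = f_1(v)$ on that range, so we may take $v_1^{**} = v^{**}$.

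The only step that genuinely requires computation rather than citation is the surface-level inequality $I_{S^2} \leq I_{T^2}$ on $(0,4\pi)$; this is the mild obstacle. In the disk regime of the torus one has $I_{T^2}(v) = 2\sqrt{\pi v}$, which dominates $\sqrt{v(4\pi - v)}$ trivially, and in the strip regime $I_{T^2}(v) = 4\sqrt{\pi}$ exceeds the maximum $2\pi$ of $I_{S^2}$. The numerical values of $v_1^*$ and $v_1^{**}$ are then inherited directly from the thresholds computed in \cite{Hauswirth} and \cite{Pedrosa}.
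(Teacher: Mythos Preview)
Your proposal is correct and follows exactly the argument the paper gives in the paragraphs preceding the theorem: cite \cite{Hauswirth} for the small-volume regime, and for large volumes squeeze $I_{T^2\times\re}$ between the slab competitor $f_1$ and the lower bound $I_{S^2\times\re}$ obtained from $I_{S^2}\leq I_{T^2}$ via the Ros product theorem, then invoke Pedrosa's computation that $I_{S^2\times\re}(v)=f_1(v)$ past $v^{**}\approx 16.66$. Your explicit verification of $I_{S^2}\leq I_{T^2}$ in the disk and strip regimes simply fills in what the paper abbreviates as ``by direct computation.''
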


In this article we paint a similar picture for the Riemannian manifold $(T^2\times \re^n,h_2+g_E)$, for $2\leq n\leq 5$ and $g_E$ the Euclidean metric on $\re^n$. Our first result is the following.

\begin{Theorem}
	\label{T2}
		Let $(T^2,h_2)$ be a standard flat torus, $T^2=\re^2/\Gamma$, with $\Gamma$ the orthogonal lattice generated by   $\{(0, 2\pi r_1),( 2\pi r_2,0)\}$, $0<r_1\leq r_2$.
For $2\leq n\leq 5$, there are some $\tilde v_n^*$ and $\tilde v_n^{**}$ such that the isoperimetric profile of $(T^2\times \re^n,h_2+g_E)$ satisfies the following.	For $v\leq\tilde v_n^*$, $I_{T^2\times \re^n}(v)= I_{S^1\times \re^{n+1}}(v)$ and for   $v\geq \tilde v_n^{**}$, $I_{T^2\times \re^n}(v)= f_n(v).$

	\end{Theorem}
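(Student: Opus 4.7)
The plan is to generalize the $n=1$ argument of Theorem~\ref{T2xR} by applying Ros--Morgan symmetrization in the Euclidean factor, followed by a case analysis on the symmetrized profile. Invoke the Ros product theorem (\cite{Ros}, Theorem 3.7) with $\re^n$ as model: any closed region $\Omega \subset T^2 \times \re^n$ admits a Schwarz symmetrization $\Omega^{*} = \{(p,x) \in T^2 \times \re^n : |x| \leq u(p)\}$ for some non-negative measurable $u$ on $T^2$, with $V^{n+2}(\Omega^{*}) = V^{n+2}(\Omega)$ and $V^{n+1}(\partial\Omega^{*}) \leq V^{n+1}(\partial\Omega)$. Writing $\omega_n = \Vol(B_1^n \subset \re^n)$, the problem reduces to minimizing
$$P(u) = n\omega_n \int_{T^2} u^{n-1}\sqrt{1+|\nabla u|^2}\,dp \quad \text{subject to} \quad \omega_n \int_{T^2} u^n\,dp = v.$$

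For the small-volume regime, I would combine existence of isoperimetric regions for each $v>0$ (standard compactness, ruling out escape to infinity in $\re^n$ by a perimeter estimate) with the monotonicity formula to conclude that the diameter of such a minimizer tends to $0$ as $v \to 0$. Once this diameter is smaller than the injectivity radius $\pi r_1$ of $T^2$, the region lifts isometrically to $\re^{n+2}$, where the classical Euclidean isoperimetric inequality forces it to be a round ball. Hence $I_{T^2\times\re^n}(v) = I_{\re^{n+2}}(v)$ on a neighborhood of $0$; on the same range Pedrosa--Ritor\'e (\cite{Pedrosa2}) give $I_{S^1\times\re^{n+1}}(v) = I_{\re^{n+2}}(v)$ (valid since $n+1 \leq 6$), and taking $\tilde v_n^{*}$ to be the minimum of the two thresholds yields the claimed equality.

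For the large-volume regime, the upper bound $I_{T^2\times\re^n}(v) \leq f_n(v)$ is realized by $\Omega = T^2 \times B_R^n$. The matching lower bound I would obtain by applying the Ros product theorem a second time, to the factorization $T^2 \times \re^n = S^1_{r_1} \times (S^1_{r_2}\times \re^n)$, deriving a comparison $I_{T^2\times\re^n}(v) \geq I_{S^1\times\re^{n+1}}(v)$ in an appropriate volume range and then invoking the fact that, by \cite{Pedrosa2}, $I_{S^1\times\re^{n+1}}(v) = f_n(v)$ in its own large-volume (tube) regime. Alternatively, one analyzes the reduced functional $P(u)$ directly: for $v$ large, a first and second variation analysis around the constant profile $u \equiv R$ (with $v = 4\pi^2 r_1 r_2\,\omega_n R^n$) shows the minimizer must be constant, producing exactly $T^2 \times B_R^n$.

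The main technical obstacle is the large-volume lower bound. The reduced functional $P(u)$ on $T^2$ admits non-constant critical profiles of the form $u=u(\theta)$, corresponding to partial tubes over a geodesic circle in $T^2$; excluding these as competitors in the range $v\geq \tilde v_n^{**}$ is not a consequence of Jensen's inequality alone and requires a quantitative comparison against the Pedrosa--Ritor\'e profile of $S^1\times\re^{n+1}$. The dimensional bound $n\leq 5$ enters through the range of validity of this profile together with regularity of isoperimetric hypersurfaces in the reduction (ambient dimension $n+2 \leq 7$). Making the thresholds $\tilde v_n^{*}$ and $\tilde v_n^{**}$ explicit in terms of $r_1, r_2$ is the final computational step.
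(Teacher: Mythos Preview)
Your strategy diverges from the paper's, and the large-volume half contains a genuine gap.

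For large $v$ your first route cannot work: the comparison $I_{T^2\times\re^n}(v)\geq I_{S^1\times\re^{n+1}}(v)$ is false for large $v$, not just unproved. Since $I_{T^2\times\re^n}(v)\leq f_n(v)$ always, while by Pedrosa--Ritor\'e $I_{S^1_{r_1}\times\re^{n+1}}(v)$ grows like $v^{n/(n+1)}$ and $f_n(v)$ only like $v^{(n-1)/n}$, one has $I_{S^1\times\re^{n+1}}(v)>f_n(v)\geq I_{T^2\times\re^n}(v)$ for all large $v$. In particular the claimed identity ``$I_{S^1\times\re^{n+1}}(v)=f_n(v)$ in its own tube regime'' is incorrect: the tube in $S^1\times\re^{n+1}$ is $S^1\times B^{n+1}_R$, not $T^2\times B^n_R$. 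Your fallback (second variation at the constant profile $u\equiv R$) you yourself flag as insufficient to exclude the one-variable competitors $u=u(\theta)$, so neither branch closes.

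The paper's argument is structurally different. Instead of symmetrizing in the $\re^n$ factor, it slices $\Omega$ along each circle factor, records the minimal and maximal slice volumes $\theta_m,\theta_M$ (and $\sigma_m,\sigma_M$), and runs an exhaustive case split against the Pedrosa--Ritor\'e threshold $\beta_n(r)$. When all slices are large ($\theta_m\geq\beta_n(r_2)$ or $\sigma_m\geq\beta_n(r_1)$) a double Ros symmetrization forces $\Omega=T^2\times B^n_R$ directly. The mixed case $0<\theta_m<\beta_n<\theta_M$ is handled not by Ros alone but by a Morgan-type two-dimensional model (giving an absolute lower bound $K^*$ on perimeter) together with a cut-and-embed trick that yields $I_{S^1\times\re^{n+1}}(v)-2\beta_n(r)\leq I_{T^2\times\re^n}(v)$; for $v$ large enough this exceeds $f_n(v)$ and kills the mixed case. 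This is exactly the quantitative mechanism your outline is missing.

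Your small-volume sketch (diameter control plus lifting within the injectivity radius) is plausible and would give \emph{some} $\tilde v_n^*>0$, but it is softer than the paper's method and does not by itself produce the explicit threshold in terms of $r_1,r_2,n$; the paper instead uses the same slice analysis (small or vanishing slice volume forces $I_{S^1\times\re^{n+1}}$ as a lower bound) combined with the Morgan bound $K^*$ to rule out the mixed case below an explicit $v_n^*$.
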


  Moreover, our proof  gives simple formulas to compute explicit lower bounds for   $\tilde v_n^*$ and upper bounds for $\tilde v_n^{**}$, as functions only of $n, r_1, r_2$. For example, a lower bound for  $\tilde v_n^{**}$ is  $v_n^{**}= \max\{a_n, b_{n}\}$, where  $a_n$ is such that 
 $I_{S^1_{r_1}\times \re^{n+1}}(a_n)- f_n(a_n) = 2\beta_n(r_1)$, and  
 $b_n$ such that
$ I_{S^1_{r_2}\times \re^{n+1}}(b_n)- f_n(b_n) = 2\beta_n(r_2)$. $\beta_n(r)$ being given by eq. (\ref{beta}).
 
 On the other hand, an upper bound for $\tilde v_n^*$ is  $v_n^*=\min\{c_n, v_s\}$, where $v_s = \min \{V^{n+2} (S^1_{r_1} \times B^{n+1}_{\pi r_2}), V^{n+2} (B^{n+2}_{\pi r_1}) \}$ and $c_n$ is such that 
 $I_{S^1_{r_1}\times S^1_{r_2}\times \re^{n+1}}(c_n) =K^*$, where $K^*>0$ is given by Lemma 
 \ref{tm}. See the proof of Theorem \ref{T2} for details on these estimates. Numerical estimates  for  $v_2^*\leq \tilde v_2^* $ and $v_2^{**}\geq \tilde v_2^*$ for $r_1=r_2=1$ are $v_2^{*}\approx 5.25$, and $v_2^{**}\approx 70.12 $. 
  
  \begin{figure}
  	\includegraphics[scale=.3]{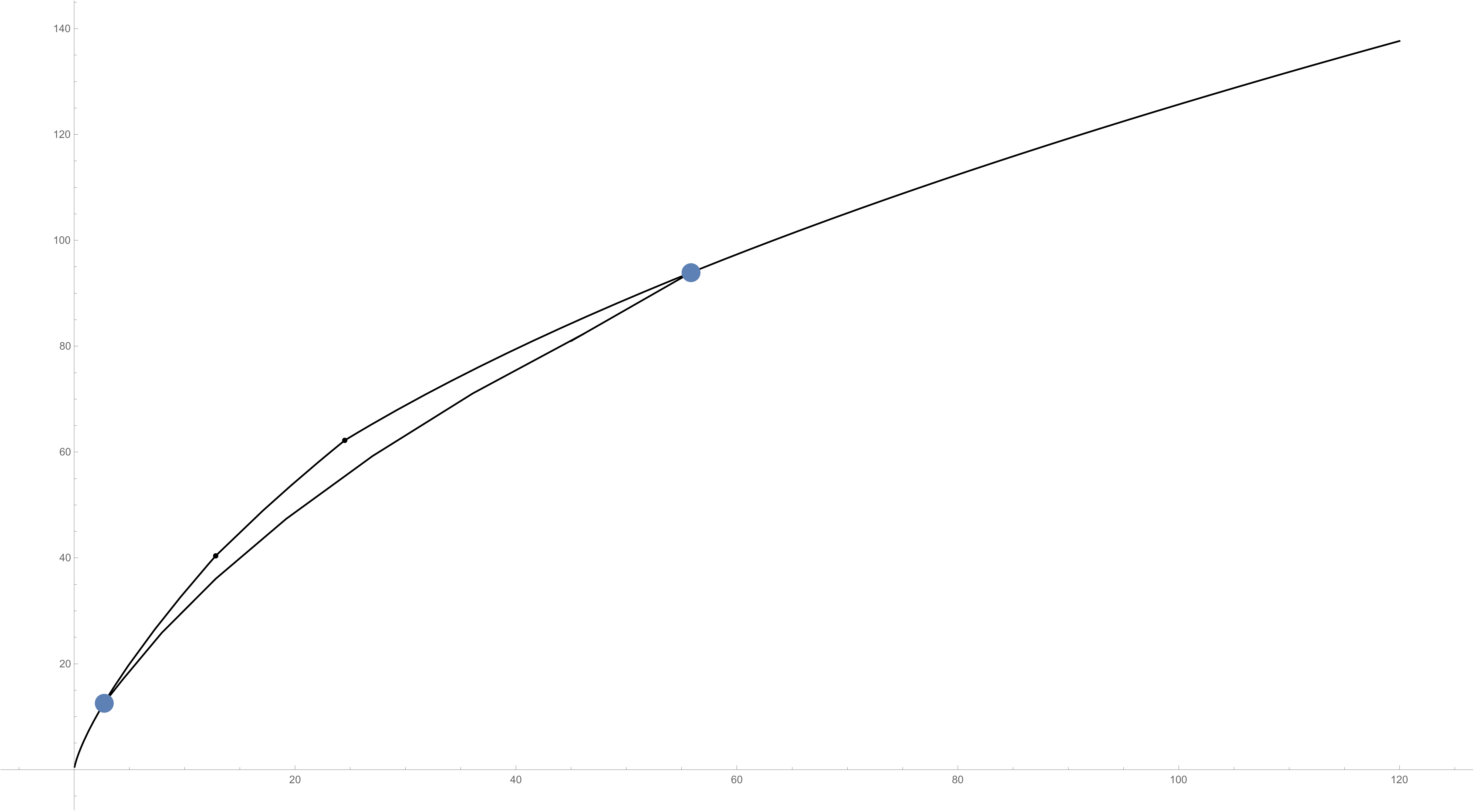}
  	\caption{ Before $v_2^*\approx 2.7$ and after $v_2^{**}\approx 55.8$, the isoperimetric profile of  $(T^2\times \re^2,h+g_E)$ is known precisely. In the interval between $v_2^*$ and  $v_2^{**} $, is bounded above and below. See example \ref{example} for details. $(T^2,h)$ is a standard flat torus, $T^2=\re^2/\Gamma$, with $\Gamma$ the orthogonal lattice generated by   $\{(2\sqrt{ \pi},0),(0,2\sqrt{\pi})\}$. $g_E$ is the Euclidean metric.}
  	  	\label{isopprofT2R2}
  \end{figure}

The bounds for  $\tilde v_n^*$ and $\tilde v_n^{**}$ we give are not optimal. In fact, we conjecture that $\tilde v_n^*=\tilde v_n^{**}$. That is, $I_{S_{r_1}^1\times S_{r_2}^1\times \re^{n}}(v)=I_{scp}(v)$, where $I_{scp}(v)=\min\{I_{S_{r_1}^1\times \re^{n+1}}(v), f_n(v)\}$.

Through the same symmetrization techniques one can obtain corresponding results for $T^3\times \re^n$, based on the estimates for  the isoperimetric profile of $T^{2}\times \re^n$. We first define a corresponding function for the area  of  regions of the type $V^{n+3}(\partial(T^3\times B_R^{n}))$. Given a volume $v$ consider the function 
$g_n(v)=V^{n+2}(\partial(T^3\times B_R^{n}))$,
with $R$ such that  $V^{n+3}(T^3\times B^{n}_R)=v$.

\begin{Theorem}
	\label{T3}
Let $(T^3,h_3)$ be a standard flat k-torus, $T^3=S^1_{r_1}\times S^1_{r_2}\times S^1_{r_3} $, $r_1\leq r_2\leq r_3$. Let $2\leq n\leq 4$.
Suppose that  there are some $v_{n}^*, v_{n}^{**}$, with $v_{n}^{**} \geq v_{n}^*>0$, such that the following is satisfied.	For $v\leq v_{n}^*$, $I_{S^1_{r_1}\times S^1_{r_2}\times \re^n}(v)= I_{ S^1_{r_1}\times \re^{n+1}}(v)$. For   $v\geq v_{n}^{**}$,  $I_{S^1_{r_1}\times S^1_{r_2}\times \re^n}(v)=  f_{n}(v).$

Then, there are some $\tilde u_{n}^*, \tilde u_n^{**}$, with  $\tilde u_{n}^{**}\geq \tilde u_{n}^*>0$, such that  the following is satisfied.	For  $v\leq \tilde u_n^*$,  $I_{S^1_{r_1}\times \re^{n+2}}(v)= I_{S^1_{r_1}\times S^1_{r_2}\times S^1_{r_3} \times \re^{n}}(v)$. And for   $v\geq\tilde u_{n}^{**}$,  $I_{S^1_{r_1}\times S^1_{r_2}\times S^1_{r_3} \times \re^n}(v)=  g_{n}(v).$
\end{Theorem}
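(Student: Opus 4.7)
The argument parallels the proof of Theorem~\ref{T2}, now using the hypothesized profile of $T^2\times\re^n$ as the inductive input. The plan is to decompose
$T^3\times\re^n = S^1_{r_3}\times(T^2\times\re^n)$
and apply Ros' product theorem relative to the model factor $S^1_{r_3}$, in the spirit of the symmetrization step that was applied to an $S^1$-factor of $T^2$ in the proof of Theorem~\ref{T2}.

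\emph{Small volumes.} Thin cylinders $S^1_{r_1}\times\{p\}\times\{q\}\times B^{n+1}_R$ are embedded closed regions of $T^3\times\re^n$ whenever $R$ stays below the girth constraints imposed by $r_2$ and $r_3$, yielding
$I_{T^3\times\re^n}(v) \leq I_{S^1_{r_1}\times\re^{n+2}}(v)$
for $v$ below an explicit geometric threshold. For the matching lower bound I would feed the hypothesis $I_{S^1_{r_1}\times S^1_{r_2}\times\re^n}(w) = I_{S^1_{r_1}\times\re^{n+1}}(w)$ (valid for $w\leq v_n^*$) into Ros' symmetrization along $S^1_{r_3}$. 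This reduces the comparison to an isoperimetric estimate in $S^1_{r_3}\times S^1_{r_1}\times\re^{n+1}$, whose small-volume profile equals $I_{S^1_{r_1}\times\re^{n+2}}(v)$ by the same Pedrosa--Ritor\'e type argument used in Theorem~\ref{T2} (recall that $r_1$ is the smallest radius). The largest volume where this chain is valid defines $\tilde u_n^*$.

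\emph{Large volumes.} Slabs $T^3\times B^n_R$ give the upper bound $I_{T^3\times\re^n}(v)\leq g_n(v)$. For the reverse inequality, Ros symmetrization through the model factor $S^1_{r_3}$ reduces a competitor of volume $v$ to a region in $T^2\times\re^n$ of volume $v/(2\pi r_3)$, whose area is bounded by the competitor's boundary area divided by $2\pi r_3$. For $v/(2\pi r_3)\geq v_n^{**}$ the hypothesis identifies this slice region as $T^2\times B^n_R$, with area $f_n(v/(2\pi r_3))$; multiplying back by $2\pi r_3$ reproduces $g_n(v)$. Equality in Ros' inequality then forces the original region to be $S^1_{r_3}$-invariant, hence a slab $T^3\times B^n_R$, and $\tilde u_n^{**}$ is read off as the smallest volume making the comparison tight.

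The main obstacle is the equality analysis in Ros' symmetrization: a priori the symmetrized set is only a graph-like set over $T^2\times\re^n$, not a genuine product, so one must invoke the uniqueness (up to isometry) of the slice-minimizer in both regimes to conclude that the original competitor is itself $S^1_{r_3}$-translation invariant. This is exactly the kind of rigidity step that appears in Theorem~\ref{T2}, and the restriction $2\le n\le 4$ is needed to keep $S^1_{r_1}\times\re^{n+2}$ within the range $2\le n+2\le 7$ where Pedrosa--Ritor\'e applies. Once the rigidity is established, converting the qualitative thresholds into explicit formulas for $\tilde u_n^*$ and $\tilde u_n^{**}$ in terms of $v_n^*, v_n^{**}, r_1, r_2, r_3$ is bookkeeping analogous to the definitions of $a_n, b_n, c_n$ in Theorem~\ref{T2}.
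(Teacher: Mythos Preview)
Your large-volume argument contains a genuine gap. Ros symmetrization along $S^1_{r_3}$ does \emph{not} reduce a competitor $\Omega$ of volume $v$ to a single region in $T^2\times\re^n$ of volume $v/(2\pi r_3)$: it replaces each slice $\Omega_t=\Omega\cap(\{t\}\times T^2\times\re^n)$ by an isoperimetric region of the \emph{same} slice volume $F(t)$, producing another region in $S^1_{r_3}\times T^2\times\re^n$ whose slice volumes still vary. The quotient $v/(2\pi r_3)$ would appear only if all slices already had equal volume, i.e.\ if $\Omega$ were $S^1_{r_3}$-invariant, which is what you are trying to conclude. The ``equality analysis / rigidity step'' you invoke is not how Theorem~\ref{T2} is proved either; neither proof uses any rigidity for Ros symmetrization. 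Your small-volume sketch has the mirror gap: feeding the hypothesis into Ros requires every slice volume to satisfy $F(t)\le v_n^*$, and you give no mechanism to force $\eta_M:=\max_t F(t)$ into that range.

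The paper's route is a case analysis on $\eta_m=\min_t F(t)$ and $\eta_M$. The key device for large volumes is an \emph{unwrap-and-cap} estimate (Lemma~\ref{k4}): cut $S^1_{r_3}$ at a slice of minimal volume, unroll into $T^2\times[0,2\pi r_3]\times\re^n\subset T^2\times\re^{n+1}$, and close off with two flat caps of total area $2\eta_m$. This gives $I_{T^2\times\re^{n+1}}(v)\le V^{n+2}(\partial\Omega)+2\eta_m$; if $\eta_m<v_n^{**}$ one gets $I_{T^2\times\re^{n+1}}(v)-2v_n^{**}<V^{n+2}(\partial\Omega)\le g_n(v)$, impossible for $v$ large. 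Hence $\eta_m\ge v_n^{**}$, every slice lies in the slab regime, and only \emph{then} does a double symmetrization (first along $S^1_{r_3}$, then along $T^2$; Lemma~\ref{kvol2}) legitimately collapse $\Omega$ to $T^3\times B^n_R$. For small volumes the obstruction $\eta_M>w_n^*$ is ruled out by a Morgan-type two-factor perimeter lower bound (Lemma~\ref{tm4}), after which Ros symmetrization applies (Lemma~\ref{kvol1}). Your outline misses both the cap trick and the Morgan bound, which are exactly what bridge the gap between ``the total volume is small/large'' and ``\emph{every} slice volume is small/large''.
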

	
The proof of Theorem \ref{T3} gives explicit estimates for lower bounds for  $\tilde u_n^*$ and upper bounds for $\tilde u_{n}^{**}$, based on those for  $v_{n}^{*}$ and $v_{n}^{**}$.  Of course, one may combine the ideas behind the proofs of Theorems \ref{T2} and \ref{T3} in an inductive way and  obtain corresponding results for $T^k\times \re^n$, $2\leq k\leq5$, $2\leq n\leq 7-k$, based on the estimates for  $(T^{k-1}\times \re^n,h+g_E)$.


The fact that, for big volumes, regions of the type  $M\times B_R^n$ are isoperimetric regions  of manifolds the type $M^k\times \re^n$, where $M^k$ is compact, was known to be true in general (see for example the work of J. Gonzalo \cite{Gonzalo}). Nevertheless, no explicit estimates of how big the volume should be, in order for this to happen, were known. On the other hand, isoperimetric regions with small volumes  were studied in the case $T^2\times \re$, in \cite{Hauswirth}, using symmetries and properties exclusive of three manifolds. Our approach is   different,  based on symmetrization techniques like the Ros product Theorem \cite{Ros} and  others introduced by F. Morgan in \cite{Morgan}. We also treat the more general case $T^2\times \re^n$.

Estimates for $v_n^*$ and $v_n^{**}$ give a good understanding of the general shape of the isoperimetric profile of $(T^2\times \re^n, h_2+g_E)$. For example, figure \ref{isopprofT2R2} shows lower and upper bounds for  the graphic of the isoperimetric profile of $(T^2\times \re^2,h+g_E)$; based on computations of $v_2^*$ and $v_2^{**}$.
\\

\textbf{Acknowledgments.}
The authors were supported by grant UNAM-DGAPA-PAPIIT IA106918. The authors would like to thank Professor Adolfo S\'anchez Valenzuela and CIMAT M\'erida for their hospitality, where part of this work was done. We would also like to thank Professor Mario Eudave Mu\~noz from IMATE-UNAM Juriquilla, for useful comments on the subject.

\section{Notation and background}

Existence and regularity of isoperimetric regions is a fundamental result due to the works of Almgren \cite{Almgren}, Gr\"uter \cite{Gruter}, Gonzalez, Massari, Tamanini \cite{Gonzalez}, (see also Morgan \cite{MorganBook}, Ros \cite{Ros}).

\begin{Theorem}
	\label{existence}
 Let $M^n$ be a compact Riemannian manifold, or non-compact with $M/G$ compact, being $G$ the
 isometry group of $M$. Then, for any $t$, $0 < t < V(M)$, there exists a
compact region $\Omega \subset M$, whose boundary $\Sigma = \partial \Omega$ minimizes area among regions of
volume $t$. Moreover, except for a closed singular set of Hausdorff dimension at most
$n - 8$, the boundary $\Sigma$ of any minimizing region is a smooth embedded hypersurface
with constant mean curvature.

\end{Theorem}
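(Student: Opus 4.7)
The plan is to assemble the statement from three standard pieces of geometric measure theory rather than to prove anything from scratch, since the statement is a compilation of the cited works of Almgren, Gr\"uter, Gonzalez--Massari--Tamanini, together with the codimension-$8$ regularity of area-minimizing hypersurfaces.

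First, for existence in the compact case, I would work in the space of sets of finite perimeter (Caccioppoli sets) on $M$. The perimeter functional $V^{n-1}(\partial^* \Omega)$ (defined via total variation of $\chi_\Omega$) is lower semicontinuous under $L^1$ convergence, and the volume functional $V^n(\Omega)$ is continuous. Taking a minimizing sequence $\Omega_i$ with $V^n(\Omega_i)=t$ and uniformly bounded perimeter, the $BV$ compactness theorem on compact manifolds yields an $L^1$-subsequential limit $\Omega$; lower semicontinuity then forces $\Omega$ to be a minimizer of area among sets of volume $t$.

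Second, for the non-compact case under the hypothesis that $M/G$ is compact, the danger is that a minimizing sequence leaks mass to infinity. I would handle this by fixing a fundamental domain $D$ for $G$ and, at each step, applying elements of $G$ to translate a largest concentrated piece of $\Omega_i$ back into a bounded neighborhood of $D$; this is a standard concentration-compactness argument (compare Morgan \cite{MorganBook}). One shows that either the translated sequence has an $L^1_{loc}$ limit that captures the full volume $t$ (in which case we win by lower semicontinuity as above), or that a positive fraction $t'<t$ escapes; in the latter case one patches the lost mass back in by a small geodesic ball far from the limit, and a comparison with the Euclidean isoperimetric inequality in a small chart shows the total perimeter can only decrease, contradicting minimality unless the escaping mass was zero.

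Third, for regularity, once a minimizer $\Omega$ exists the first variation formula shows that $\partial \Omega$ has constant generalized mean curvature, and $\partial\Omega$ is an almost-minimizing integer rectifiable current in the sense of Almgren. The interior regularity theorem for such currents (Almgren \cite{Almgren}, Gr\"uter \cite{Gruter}, Gonzalez--Massari--Tamanini \cite{Gonzalez}) then gives that $\partial\Omega$ is a smooth embedded hypersurface away from a singular set of Hausdorff dimension at most $n-8$; the exponent is sharp because of the Simons cone in $\mathbb{R}^8$. The main obstacle is really the non-compact step: making the concentration-compactness argument under the cocompact isometry hypothesis fully rigorous, in particular ruling out infinitely many pieces of escaping mass. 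For the manifolds $T^k\times\mathbb{R}^n$ used in the rest of the paper this is straightforward because $G$ contains the full translation group of $\mathbb{R}^n$, so a single translation suffices to recenter any concentrating piece.
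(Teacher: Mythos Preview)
Your outline is sound, but note that the paper does not actually prove this theorem: it is stated in the ``Notation and background'' section purely as a citation of Almgren, Gr\"uter, Gonzalez--Massari--Tamanini, Morgan and Ros, with no argument given. Your sketch of the direct method in $BV$, the concentration-compactness step under a cocompact isometry group, and the almost-minimizer regularity theory is exactly the content of those references, so in effect you have expanded what the paper leaves as a black box; there is nothing to compare beyond that.
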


Note that $T^2\times \re^n$ has no boundary, and is compact if it is acted upon by its isometry group. Also since we will only be dealing with the cases $n+2\leq 7$, every hypersurface $\Sigma$ enclosing an isoperimetric region will be smooth and of constant mean curvature (CMC).
Throughout the article, $B^n_{R}$ will denote an n-dimensional ball of radius $R$ in $\re^n$.

We will work only with Tori that are Riemannian products: $(T^2,g_2)= (S^1_{r_1}\times   S^1_{r_2} , ds^2+dt^2)$ or $(T^3,g_3)= (S^1_{r_1}\times   S^1_{r_2}\times   S^1_{r_3} , ds^2+dt^2+du^2)$, for some $r_1, r_2, r_3 \in \re^+$. 
Without loss of generality we will assume $r_1\leq r_2 \leq r_3$.

The isoperimetric profile of the Riemannian product $S^1_r\times \re^n$, $2\leq n\leq7$  is well known, by the work of Pedrosa and Ritor\'e (Theorem 3.5 in \cite{Pedrosa})  and is given by

\begin{equation}
\label{Is1}
I_{S^1_r\times \re^n}(v) =
\begin{cases}
(1+n)^{\frac{n}{1+n}}\omega_n^{\frac{1}{1+n}} v^{\frac{n}{1+n}}, & \text{if}\ v\leq \beta_n(r) \\
n^{\frac{n-1}{n}}(2\pi r \omega_{n-1})^{\frac{1}{n}} v^{\frac{n-1}{n}}, & \text{if}\  v > \beta_n(r) \\
\end{cases}
\end{equation}

\noindent where $\omega_n=V^n(S^n)$ and  

\begin{equation}
\label{beta}
\beta_n(r)=n^{(n-1)(n+1)}(2 \pi r \omega_{n-1})^{n+1} (1+n)^{-n^2} \omega_n^{-n}.
\end{equation}
\noindent  Note that $\beta_n(r)$ depends only on $n$ and $r$. 
 For fixed $r$ and $n$, $\beta_n(r)$ is the critical number such that for volumes less than  $\beta_n(r)$, balls $B^{n+1}_R\subset S^1_r\times \re^n$ are isoperimetric; while for volumes greater than  $\beta_n(r)$, regions of the type $S_r^1\times B^n_R \subset S^1\times \re^n$ are isoperimetric.
The isoperimetric profile is continuous. We will denote by $\alpha_n(r)$ the area of the isoperimetric regions of volume $\beta_n(r)$; this is,  $\alpha_n(r)=I_{S^1_r\times \re^n}(\beta_n(r))$.

\section{The isoperimetric profile of $T^2\times \re^n$}

It was conjectured in \cite{Hauswirth} that the isoperimetric profile of $T^2\times \re$ is composed of three parts:  for small volumes, the solutions of the isoperimetric problem are  spheres $(B^3_R)$, then, for intermediate volumes, cylinders ($S^1\times B^2_R$), then, for big volumes, planes ($T^2 \times  B^1_R$). This was called the $I_{scp}$ profile (spheres-cylinders-planes). The conjecture is then that $I_{T^2 \times \re}=I_{scp}$. In the same article, the conjecture was proven to be true for small volumes. The solutions were also proved to be unique and their proof included tori of other types, more general than only orthogonal tori.

We propose a similar conjecture for   $T^2\times \re^n$: for small volumes,  spheres $B^{n+2}_R$ are isoperimetric regions, for intermediate volumes, cylinders ($S^1\times B^{n+1}_R$), and  for big volumes, planes  ($T^2 \times  B^n_R$).
We will also call this the $I_{scp}$ profile. We will not discuss uniqueness of solutions to the isoperimetric problem. Our results make use of equation (\ref{Is1}), so that in the following, $n$ is an integer such that $2\leq n\leq 7$.

Let $\Omega$ be an isoperimetric region in $T^2\times \re^n=S^1_{r_1}\times S^1_{r_2}\times \re^n$. We may parameterize $S^1_{r_1}$ by $[0,2 \pi r_1]$ and consider the slices $\Omega_t$, $t\in [0,2\pi \  r_1),$

$$\Omega_t=\Omega \cap( \{t\}\times S^1_{r_2}\times \re^n).$$

\noindent Then for each slice $\Omega_t$,   we may compute its $(n+1)$-volume and define a function $F_{1}:[0,2 \pi r_1]\rightarrow \re$, by $F_{1}(t)=V^{n+1}(\Omega_t)$ and $F_{1}(2 \pi r_1)=V^{n+1}(\Omega_0)$.

Similarly, one may parameterize $S^1_{r_2}$ by $[0,2 \pi r_2)$ and consider the slices $\Omega_s$, $s\in [0,2\pi  r_2)$:
$$\Omega_s=\Omega \cap(  S^1_{r_1}\times \{s\}\times \re^n).$$

 Likewise  we may define   $F_{2}:[0,2 \pi \ r_2]\rightarrow \re$, by $F_{2}(s)=V^{n+1}(\Omega_s)$ and $F_{2}(2 \pi r_2)=V^{n+1}(\Omega_0)$. Of course, both $F_{1}$ and $F_{2}$ are continuous.
 Let $\theta_m$ and $\theta_M$,  and $\sigma_m$ and $\sigma_M$, denote the minimum and maximum values of $F_{1}(t)$  and of $F_{2}(s)$ respectively. 	

We start with the following.

\begin{Lemma}
	\label{vol0}
	If  $\theta_m =0$ or $\sigma_m=0$,  then $$ I_{S_{r_1}^1\times \re^{n+1}}(V^{n+2}(\Omega)) \leq V^{n+1}(\partial \Omega).$$ 
\end{Lemma}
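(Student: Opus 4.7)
The plan is to reduce to the known isoperimetric profile of $S^1 \times \re^{n+1}$ by cutting the torus factor at an empty slice. Assume first that $\sigma_m = 0$; I will handle $\theta_m = 0$ by the same argument with the two $S^1$ factors interchanged. By continuity of $F_2$ on the circle, there exists $s_0 \in [0, 2\pi r_2)$ with $V^{n+1}(\Omega_{s_0}) = 0$. The idea is to cut $T^2 \times \re^n$ along the hyperslice $S^1_{r_1} \times \{s_0\} \times \re^n$; after removing this slice (which has Lebesgue measure zero in $T^2 \times \re^n$), the remaining manifold is isometric to $S^1_{r_1} \times (0, 2\pi r_2) \times \re^n \subset S^1_{r_1} \times \re^{n+1}$.

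Under this identification, the region $\Omega$ maps to a measurable region $\widetilde{\Omega} \subset S^1_{r_1} \times \re^{n+1}$ with $V^{n+2}(\widetilde\Omega) = V^{n+2}(\Omega)$. The key point is to check that cutting does not increase perimeter: the two new pieces of boundary appearing at $s = s_0$ are both contained in $S^1_{r_1} \times \{s_0\} \times \re^n$ and have $(n+1)$-measure at most $V^{n+1}(\Omega_{s_0}) = 0$. Consequently $V^{n+1}(\partial \widetilde\Omega) \leq V^{n+1}(\partial \Omega)$, and by the definition of the isoperimetric profile,
\begin{equation*}
I_{S^1_{r_1} \times \re^{n+1}}(V^{n+2}(\Omega)) \;\leq\; V^{n+1}(\partial \widetilde\Omega) \;\leq\; V^{n+1}(\partial \Omega),
\end{equation*}
which is the desired inequality.

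In the case $\theta_m = 0$, the same cutting procedure in the $S^1_{r_1}$ direction produces a region $\widetilde\Omega' \subset S^1_{r_2} \times \re^{n+1}$ of the same volume and of area at most $V^{n+1}(\partial \Omega)$, giving $I_{S^1_{r_2} \times \re^{n+1}}(V^{n+2}(\Omega)) \leq V^{n+1}(\partial\Omega)$. To reach the stated form I will then invoke the elementary monotonicity $I_{S^1_{r_1} \times \re^{n+1}} \leq I_{S^1_{r_2} \times \re^{n+1}}$ for $r_1 \leq r_2$, which is read off directly from the explicit formula \eqref{Is1}: the spherical branch is independent of $r$, while the cylindrical branch is increasing in $r$, so taking the minimum is non-decreasing in $r$.

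The only nontrivial point is the bookkeeping on boundaries when passing from $\Omega$ to $\widetilde\Omega$; in particular, one needs to know that the slice of $\partial\Omega$ at $s = s_0$ contributes zero $(n+1)$-measure, which is immediate because $\Omega_{s_0}$ itself has $(n+1)$-measure zero and $\partial\Omega \cap (S^1_{r_1}\times\{s_0\}\times\re^n) \subset \overline{\Omega_{s_0}}$. Once this is verified, everything else is purely a matter of applying the definition of $I$ and the monotonicity in $r$.
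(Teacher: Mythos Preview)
Your argument is correct and follows essentially the same route as the paper: cut the torus at a slice of zero $(n+1)$-measure to embed $\Omega$ as a closed region in $S^1_{r_i}\times\re^{n+1}$, then apply the known profile and, in the $\theta_m=0$ case, the monotonicity $I_{S^1_{r_1}\times\re^{n+1}}\leq I_{S^1_{r_2}\times\re^{n+1}}$ from \eqref{Is1}. The only cosmetic difference is the order of the two cases and that the paper phrases the construction as adding a zero-measure cap rather than as a cut.
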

\begin{proof}
Suppose	$\theta_m=0$.  Let $t_0 \in [0, 2 \pi r_1]$ be such that $F_1(t_0)=\theta_m=0$.
We construct a new closed region $\Omega^* \subset [t_0,t_0+2 \pi r_1] \times S_{r_2}^1 \times\re^{n}$, in the following way.
 For $t\in [0, 2\pi r_1)$, let $\Omega^*_{t_0+t}=\Omega_t$. Also, let $\Omega^*_{t_0+2\pi r_1}=\Omega_{t_0}$. That is, we are adding a copy of $\Omega_{t_0}$ at $\{t_0+2 \pi r_1\}\times S_{r_2}^1\times \re^n$.  Since, by hypothesis  $V^{n+1}(\Omega_{t_0})=0$, we then have  $V^{n+1}(\partial \Omega)=V^{n+1}(\partial\Omega^*)$ and $V^{n+1}(\Omega)=V^{n+1}(\Omega^*)$. Note also that $\Omega^*$ is a closed region in $\re\times S^1_{r_2} \times \re^n$, by continuity of $F_1$. It follows that 
\begin{equation}
\label{firstfirst}
I_{S_{r_2}^1\times \re^{n+1}}(V^{n+2}(\Omega))=I_{S_{r_2}^1\times \re^{n+1}}(V^{n+2}(\Omega^*))\leq V^{n+1}(\partial \Omega^*) = V^{n+1}(\partial\Omega).
\end{equation}
 
Finally, since $r_1\leq r_2$,  eqs.  (\ref{Is1}) and (\ref{firstfirst})  imply 
$$I_{S_{r_1}^1\times \re^{n+1}}(V^{n+2}(\Omega))\leq I_{S_{r_2}^1\times \re^{n+1}}(V^{n+2}(\Omega)) \leq V^{n+1}(\partial\Omega).$$  

The proof of the case	$F_2(s_0)=0$ is very similar, as in this case $\Omega$ can also be embedded isometrically in $S_{r_1}^1\times \re \times\re^{n}$ as a closed region, by adding an $(n+1)$ zero measure set $\Omega_{s_0}$.  Hence  in this case we also have $I_{S_{r_1}^1\times \re^{n+1}}(V^{n+2}(\Omega))\leq V^{n+1}(\partial\Omega)$. 
  
\end{proof}


\begin{Lemma}
	\label{vol1}
	If $\theta_M \leq \beta_n(r_2)$ or  $\sigma_M \leq \beta_n(r_1)$, then $ I_{S^1_{r_1}\times \re^{n+1}}(V^{n+2}(\Omega)) \leq V^{n+1}(\partial \Omega)$.
\end{Lemma}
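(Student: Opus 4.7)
The plan is a Ros-type slice-by-slice symmetrization (Theorem 3.7 of \cite{Ros}): replace each slice of $\Omega$ by a Euclidean ball of the same $(n+1)$-volume. The hypothesis places every slice in the ``ball regime'' of (\ref{Is1}), so balls are actual isoperimetric regions for the slice volume and the replacement does not increase slice perimeter.

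Suppose first that $\theta_M \leq \beta_n(r_2)$. For each $t \in [0, 2\pi r_1]$, let $R(t) \geq 0$ satisfy $V^{n+1}(B^{n+1}_{R(t)}) = F_1(t)$. Since $F_1(t) \leq \theta_M \leq \beta_n(r_2)$, formula (\ref{Is1}) gives the pointwise slice inequality
\[
V^n(\partial B^{n+1}_{R(t)}) = I_{\re^{n+1}}(F_1(t)) = I_{S^1_{r_2} \times \re^n}(F_1(t)) \leq V^n\bigl(\partial_{S^1_{r_2} \times \re^n} \Omega_t\bigr),
\]
the last step being the isoperimetric inequality in $S^1_{r_2} \times \re^n$ applied to the slice $\Omega_t$. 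Define $\Omega^* \subset S^1_{r_1} \times \re^{n+1}$ by $\Omega^* \cap (\{t\} \times \re^{n+1}) = \{t\} \times B^{n+1}_{R(t)}$, with the balls concentric about a fixed origin. Then $V^{n+2}(\Omega^*) = \int_0^{2\pi r_1} F_1(t)\,dt = V^{n+2}(\Omega)$, and Ros's product symmetrization, fed the pointwise slice inequality above, yields $V^{n+1}(\partial \Omega^*) \leq V^{n+1}(\partial \Omega)$. Since $\Omega^*$ is an admissible region in $S^1_{r_1} \times \re^{n+1}$, this gives
\[
I_{S^1_{r_1} \times \re^{n+1}}(V^{n+2}(\Omega)) \leq V^{n+1}(\partial \Omega^*) \leq V^{n+1}(\partial \Omega).
\]

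The case $\sigma_M \leq \beta_n(r_1)$ is handled symmetrically: slicing by $s \in S^1_{r_2}$ and running the same argument produces a region $\Omega^* \subset S^1_{r_2} \times \re^{n+1}$ with $V^{n+2}(\Omega^*) = V^{n+2}(\Omega)$ and $V^{n+1}(\partial \Omega^*) \leq V^{n+1}(\partial \Omega)$. The desired conclusion then follows from the monotonicity $I_{S^1_{r_1} \times \re^{n+1}} \leq I_{S^1_{r_2} \times \re^{n+1}}$, itself immediate from (\ref{Is1}) together with $r_1 \leq r_2$, since the tube part is increasing in $r$, the ball regime is independent of $r$, and the transition volume $\beta_{n+1}(r)$ is increasing in $r$.

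The main obstacle is the product symmetrization step: one must verify that decreasing each slice perimeter also decreases the total $(n+1)$-perimeter, which requires correctly accounting for the lateral boundary contributions coming from the variation of $R(t)$ in $t$. This is precisely what Ros's theorem packages, exploiting that concentric Euclidean balls saturate the relevant convexity inequalities.
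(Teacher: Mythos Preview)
Your proof is correct and follows essentially the same Ros-type symmetrization as the paper's: replace each slice by a concentric Euclidean ball of equal volume, use $\theta_M\le\beta_n(r_2)$ to put each slice in the ball regime of (\ref{Is1}), and invoke the Ros product theorem to pass from slice-wise to total perimeter. The only cosmetic differences are that the paper first places $\Omega^*$ inside $S^1_{r_1}\times S^1_{r_2}\times\re^n$ before reinterpreting it in a cylinder, and separately defers the degenerate case $\theta_m=0$ to Lemma~\ref{vol0}.
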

\begin{proof}
	We will suppose  $\theta_M \leq \beta_n(r_2)$; the proof of the case $\sigma_M \leq \beta_n(r_1)$ is   similar. We will also suppose $\theta_m >0$ and $\sigma_m>0$ since the case $\theta_m =0$ or $\sigma_m=0$ is treated in Lemma \ref{vol0}.
	
 The idea is to  symmetrize the isoperimetric region $\Omega$ as in the proof of the Ros Product Theorem (\cite{Ros}). We construct a new region $\Omega^* \subset S^1_{r_1}\times S^1_{r_2}\times \re^n$ by replacing each $\Omega_t\subset\{t\}\times S_{r_2}^1\times \re^n$ with an isoperimetric region in $\{t\}\times S_{r_2}^1\times \re^n$. That is, we let $\Omega^*_t=\{t\}\times B_{R(t)}^{n+1}$, where $R(t)>0$ is such that  $V^{n+1}(\{t\}\times B_{R(t)}^{n+1})=V^{n+1}(\Omega_t)$.  Since $\theta_M\leq \beta_n(r_2)$, then $V^{n+1}(\Omega_t)\leq \beta_n(r_2)$ for each $t\in [0,2\pi r_1)$ and hence $\{t\}\times B_{R(t)}^{n+1}\subset  \{t\}\times S_{r_2}^1\times \re^n$ for each $t$.
 
 Also, since $F_{1}(t)$ is continuous, the region $\Omega^*$ is closed in $S^1_{r_1}\times S^1_{r_2}\times \re^n$. Note also that by construction $V^{n+2}(\Omega^*)=V^{n+2}(\Omega)$.
	
	Recall  from eq. (\ref{Is1}) that for $v\leq \beta_n(r_2)$, $ I_{S^1_{r_2}\times \re^{n}}(v)=I_{\re^{n+1}}(v)$. Since  $\theta_M \leq \beta_n(r_2)$, it follows that  for each $t\in [0,2\pi r_1)$:
	
	$$V^{n+1}(\partial\Omega^*_t)=V^{n+1}(\partial B_{R(t)}^{n+1})=I_{\re^{n+1}}(V^{n+2}(\Omega_t))= I_{S^1_{r_2}\times \re^{n}}(V^{n+2}(\Omega_t)) \leq V^{n+1}(\partial \Omega_t).$$
	
  \noindent Arguing as in the proof of the Ros Product  Theorem, from the last inequality we get
	$$V^{n+1}(\partial \Omega^*)\leq V^{n+1}( \partial \Omega).$$
	Moreover, since $V^{n+2}(\Omega^*)=V^{n+2}(\Omega)$ and  $\Omega^*$ is a closed region in $S^1_{r_2}\times \re^{n+1}$, we have 	
	 $$I_{S^1_{r_2}\times \re^{n+1}}(V^{n+2}(\Omega))=I_{S^1_{r_2}\times \re^{n+1}}(V^{n+2}(\Omega^*))\leq  V^{n+1}(\partial \Omega^*) \leq V^{n+1} (\partial \Omega).$$
	
	Since $r_1\leq r_2$ the conclusion of the lemma follows.
	
\end{proof}

Recall the definition of $f_n(v)$ by equation (\ref{fv}). We prove the following.

\begin{Lemma}
	\label{vol2}
	If $\theta_m \geq \beta_n(r_2)$ or $\sigma_m \geq \beta_n(r_1)$, then 
	$$ V^{n+1}( \partial \Omega)=f_n(V^{n+2}(\Omega)) .$$ 
\end{Lemma}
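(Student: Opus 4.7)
The plan is to symmetrize $\Omega$ in two steps, reducing it to a region of the form $T^2\times B_R^n$. Without loss of generality assume $\theta_m\geq \beta_n(r_2)$; the case $\sigma_m\geq \beta_n(r_1)$ is symmetric. The first step mimics the proof of Lemma \ref{vol1}, but symmetrizing into the upper branch of equation (\ref{Is1}): for each $t$, since $F_{1}(t)\geq \beta_n(r_2)$, the isoperimetric region in $\{t\}\times S^1_{r_2}\times \re^n$ of volume $F_{1}(t)$ is a cylinder $\Omega^*_t = \{t\}\times S^1_{r_2}\times B^n_{R(t)}$, with $R(t)$ uniquely determined and continuous in $t$. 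Setting $\Omega^*=\bigcup_t \Omega^*_t$, the Ros-style argument used in Lemma \ref{vol1} gives $V^{n+2}(\Omega^*)=V^{n+2}(\Omega)$ and $V^{n+1}(\partial \Omega^*)\leq V^{n+1}(\partial \Omega)$.

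For the second step I note that $\Omega^*$ is $S^1_{r_2}$-invariant, so it factors as $\Omega^*=A\times S^1_{r_2}$ where $A=\{(t,x)\in S^1_{r_1}\times \re^n : |x|\leq R(t)\}$. Since $S^1_{r_2}$ has no boundary, $V^{n+1}(\partial \Omega^*) = 2\pi r_2\, V^n(\partial A)$ and $V^{n+2}(\Omega^*) = 2\pi r_2\, V^{n+1}(A)$. I now apply the isoperimetric inequality in $S^1_{r_1}\times \re^n$ to $A$. The hypothesis $\theta_m \geq \beta_n(r_2)$ implies $V^{n+2}(\Omega)\geq 2\pi r_1 \beta_n(r_2)$, and equation (\ref{beta}) shows $\beta_n(r)=K_n r^{n+1}$ for some constant $K_n$ depending only on $n$; combined with $r_1\leq r_2$ this gives
\[
V^{n+1}(A)=\frac{V^{n+2}(\Omega)}{2\pi r_2}\geq \frac{r_1}{r_2}\beta_n(r_2)=K_n r_1 r_2^n\geq K_n r_1^{n+1}=\beta_n(r_1).
\]
By equation (\ref{Is1}) we then have $V^n(\partial A)\geq I_{S^1_{r_1}\times \re^n}(V^{n+1}(A)) = 2\pi r_1 \omega_{n-1}R^{n-1}$, where $R$ satisfies $V^{n+1}(A)=2\pi r_1\omega_{n-1} R^n/n$. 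A direct computation shows that this $R$ is precisely the radius appearing in the definition of $f_n(V^{n+2}(\Omega))$.

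Assembling the inequalities,
\[
V^{n+1}(\partial \Omega)\geq V^{n+1}(\partial \Omega^*)=2\pi r_2\, V^n(\partial A)\geq 4\pi^2 r_1 r_2 \omega_{n-1}R^{n-1}=f_n(V^{n+2}(\Omega)).
\]
Since $T^2\times B_R^n$ is a valid competitor of volume $V^{n+2}(\Omega)$ and area $f_n(V^{n+2}(\Omega))$, isoperimetricity of $\Omega$ forces equality. The main obstacle is justifying step one: the Ros Product Theorem as originally stated applies when one symmetrizes in a ``model'' factor where isoperimetric solutions are balls, whereas here the slices are replaced by cylinders $S^1_{r_2}\times B^n_{R(t)}$ coming from the large-volume branch of (\ref{Is1}). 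One must verify that the same coarea/slicewise argument—resting on $V^n(\partial \Omega_t)\geq I_{S^1_{r_2}\times \re^n}(F_1(t))$—still delivers $V^{n+1}(\partial \Omega^*)\leq V^{n+1}(\partial \Omega)$; this is parallel to Lemma \ref{vol1} but with the cylindrical rather than spherical isoperimetric profile as input, and the continuity of $F_1$ (hence of $R(t)$) ensures $\Omega^*$ is a valid closed region.
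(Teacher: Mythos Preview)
Your proof is correct and follows the same two-step strategy as the paper: first symmetrize each $S^1_{r_1}$-slice into a cylinder $S^1_{r_2}\times B^n_{R(t)}$ via the Ros argument, then reduce to the isoperimetric problem in $S^1_{r_1}\times\re^n$. The paper phrases the second step as a second Ros-type symmetrization (reslicing $\Omega^*$ along $S^1_{r_2}$, observing the slice-volume function $G(s)$ is constant, and replacing each slice by $S^1_{r_1}\times\{s\}\times B^n_{R_0}$), whereas you exploit the $S^1_{r_2}$-invariance of $\Omega^*$ directly to write $\Omega^*=S^1_{r_2}\times A$ and then apply the isoperimetric inequality in $S^1_{r_1}\times\re^n$ to the factor $A$. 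This is the same content packaged more economically, and your volume computation showing $V^{n+1}(A)\geq\beta_n(r_1)$ via $\beta_n(r)=K_n r^{n+1}$ together with $r_1\leq r_2$ is exactly the estimate needed at that point. The concern you raise about step one---whether the Ros argument applies when symmetrizing into cylinders rather than balls---is legitimate but not a gap: what the Ros coarea argument actually needs is that the model regions be nested by volume, which the cylinders $S^1_{r_2}\times B^n_R$ clearly are; the paper handles this by the same bare appeal to the Ros product theorem.
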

\begin{proof}
	
	We will prove the case $\theta_m \geq \beta_n(r_2)$. The other one is similar.
	
	Recall from eq. (\ref{Is1}) that for  $v\geq \beta_n(r_2)$, isoperimetric regions in $S^1_{r_2}\times \re^n$ are of the type $S^1_{r_2}\times B^n_R$. This means $I_{S^1_{r_2}\times \re^{n}}(v)=V^1(S^1_{r_2}) V^{n-1} (\partial B^n_R)$, for some $R>0$ such that $v=V^1(S^1_{r_2}) V^n(B^n_R)$. 
We symmetrize the isoperimetric region $\Omega$ as in the proof of the Ros Product Theorem: we replace each $\Omega_t$  in $\{t\}\times S^1_{r_2}\times \re^n$ by a product of $S^1_{r_2}$ and ball $B_{R(t)}^{n}\subset \re^n$ such that  $V^1(S^1_{r_2}) V^n(B_{R(t)}^{n})=V^{n+1}(\Omega_t)$. We denote the new region in $ S^1_{r_1}\times S^1_{r_2}\times \re^n$ by $\Omega^*$. Since $F_{1}(t)$ is continuous, and for each $t$ we are using a region of the type $  S^1_{r_2}\times B_{R(t)}^{n}$, the region $\Omega^*$ is closed in $ S^1_{r_1}\times S^1_{r_2}\times \re^n$. Note also that $V^{n+2} (\Omega^*)=V^{n+2} (\Omega)$. And, since for each $t \in [0,2\pi r_1)$ we have 
$$ V^{n} (\partial \Omega^*_t)=I_{S^1_{r_2}\times \re^{n}}(V^{n+1} (\Omega_t))\leq V^{n} (\partial \Omega_t),$$ 
it follows from the Ros product Theorem that 
\begin{equation}
\label{firstsym}
V^{n+1} (\partial\Omega^*)\leq V^{n} (\partial\Omega).
\end{equation}

We now symmetrize $\Omega^*\subset S^1_{r_1}\times S^1_{r_2}\times \re^n$ with respect to the other factor, $S^1_{r_2}$. We parameterize  $S^1_{r_2}$ by $[0,2 \pi \ r_2)$ and consider the slices $\Omega^*_s$, $s\in [0,2\pi r_2)$:

$$\Omega^*_s=\Omega^* \cap(  S^1_{r_1}\times \{s\}\times \re^n).$$

For each slice we may compute its $(n+1)$-volume and define a function $G:[0,2 \pi r_2]\rightarrow \re$, by
$G(s)=V^{n+1}(\Omega^*_s)$ for $[0,2 \pi r_2)$  and $G(2\pi r_2)=V^{n+1}(\Omega^*_0)$. Note that $G$ is continuous. Moreover,   by construction,  for each $t$ and any  $s_1, s_2 \in [0,2 \pi r_2]$,

$$\{t\}\times\{s_1\}\times  B^n_{R(t)}=\{t\}\times\{s_2\}\times  B^n_{R(t)}.$$

\noindent This implies that both slices $\Omega^*_{s_1}$ and $\Omega^*_{s_2}$ have the same volume.   It follows that $G(s)$ is constant.


We now claim that $G(s)\geq \beta_n(r_2)$: by hypothesis $\theta_m \geq \beta_n(r_2)$, which implies 
\begin{equation}
\label{lessthanbeta}
V^{n+2}(\Omega)\geq V^1(S_{r_1})\beta_n(r_2).
\end{equation}

\noindent  If the claim were not true, then $G(s)< \beta_n(r_2)$ and we would have
$$V^{n+2}(\Omega^*)=V^1(S_{r_1})G(s)<V^1(S_{r_1})\beta_n(r_2),$$

\noindent which is ruled out by inequality (\ref{lessthanbeta}), since  $V^{n+2} (\Omega^*)=V^{n+2} (\Omega).$
 
We now construct a new region $\Omega^{**}\subset S^1_{r_1}\times S^1_{r_2} \times \re^n$ by letting each slice $\Omega^{**}_{s}= S^1_{r_1}\times \{s\} \times B_{R_0}^{n}$, where $R_0$ is such that  $V^1(S^1_{r_1}) V^n(B_{R_0}^{n})=G(s)$.

Since  $G(s)$  is constant, then $R_0$ is constant, and we get $V^{n+2}(\Omega^{**})=V^{n+2}(\Omega^*)=V^{n+2}(\Omega)$. Moreover, by continuity of $G(s)$, the region $\Omega^{**}$ is closed. And  since $G(s)\geq \beta_n(r_2)$, we have 
$$V^{n}(\partial\Omega^{**}_s)= V^{n}(\partial(S^1_{r_1}\times \{s\} \times B_{R_0}^{n})) \leq V^{n}(\partial\Omega^{*}_s).$$
\noindent Hence,   using the Ros Product Theorem we get $V^{n+1}(\partial \Omega^{**})\leq V^{n+1}(\partial\Omega^*)$. And together with eq. (\ref{firstsym}):

$$V^{n+1}(\partial \Omega^{**})\leq V^{n+1}(\partial\Omega^*)\leq V^{n+1}(\partial\Omega).$$

Finally, by construction, we have that in fact
 $$\Omega^{**}= S^1_{r_1}\times S^1_{r_2} \times B_{R_0}^{n}.$$
 
 It  follows that 
 $$f_n(V^{n+2}(\Omega))= V^{n+1}(\partial(S^1_{r_1}\times S^1_{r_2} \times B_{R_0}^{n}))= V^{n+1}(\partial \Omega^{**})\leq V^{n+1}(\partial\Omega).$$

Being $\Omega$ isoperimetric, we conclude that $f_n(V^{n+2}(\Omega))= V^{n+1}(\partial\Omega)$.


\end{proof}

We now prove that the case $0<\theta_m<\beta_n(r_2)<\theta_M$ and $0<\sigma_m<\beta_n(r_1)<\sigma_M$ cannot occur for small areas of $\Omega$.

\begin{Lemma}
	\label{tm} Suppose that $0<\theta_m<\beta_n(r_2)<\theta_M$ and $0<\sigma_m<\beta_n(r_1)<\sigma_M$ occurs.

 Then there is some $K^*>0$ such that  $V^{n+1}(\partial \Omega) \geq K^*.$ Moreover, $K^*$  is independent of $
  \Omega$ and depends only on $r_1,r_2,n$. In particular, is given by
\begin{equation}
\label{K}
K^*=\max\{V^1(S^1_{r_1}) \ I_{  \re^{n+1}} (\theta^*), V^1(S^1_{r_2}) \ I_{  \re^{n+1}} (\sigma^*) \},
\end{equation}
  \noindent where $\theta^*\in (0,\beta_n(r_1))$ is such that
  \begin{equation}
  \label{alg1}
  \frac{1}{2} V(S^1_{r_2})I_{\re^{n+1}}(\theta^*)+ \theta^* = \beta_n(r_1),
  \end{equation}
  
  \noindent and $\sigma^*\in (0,\beta_n(r_2))$ such that
  \begin{equation}
  \label{alg2}
  \frac{1}{2} V(S^1_{r_1})I_{\re^{n+1}}(\sigma^*)+\sigma^* =\beta_n(r_2).
  \end{equation}
  
	
\end{Lemma}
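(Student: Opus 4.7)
My plan is to prove that each of the two candidates inside the maximum defining $K^*$ is separately a lower bound for $V^{n+1}(\partial\Omega)$; by symmetry under interchange of the two $S^1$-factors (and of $(F_1,\theta^*)$ with $(F_2,\sigma^*)$), it suffices to establish $V^{n+1}(\partial\Omega)\ge V^1(S^1_{r_1})\,I_{\re^{n+1}}(\theta^*)$, and I will discuss only this one. The strategy is to dichotomize based on whether $F_1$ ever drops below the threshold $\theta^*$ given by (\ref{alg1}).

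\emph{Case A: $\theta_m\ge\theta^*$.} Every slice then satisfies $F_1(t)\ge\theta^*$, and since $\theta^*<\beta_n(r_1)\le\beta_n(r_2)$ (using $r_1\le r_2$ together with the form of (\ref{beta})), formula (\ref{Is1}) places each slice in the Euclidean-ball regime and gives $V^n(\partial\Omega_t)\ge I_{S^1_{r_2}\times\re^n}(F_1(t))\ge I_{\re^{n+1}}(\theta^*)$. Integrating over $t\in[0,2\pi r_1]$ via the Ros-type slicing inequality already used in the proofs of Lemmas~\ref{vol1} and~\ref{vol2} yields the desired bound.

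\emph{Case B: $\theta_m<\theta^*$.} The hypothesis $\theta_M>\beta_n(r_2)>\theta^*$, together with continuity of $F_1$ and the intermediate value theorem, produces a maximal interval $[t_1,t_2]\subset S^1_{r_1}$ on which $F_1\ge\theta^*$, with $F_1(t_1)=F_1(t_2)=\theta^*$, and inside which $F_1$ exceeds $\beta_n(r_2)$ somewhere. Setting $\Omega^+=\Omega\cap([t_1,t_2]\times S^1_{r_2}\times\re^n)$ and capping its two endpoint slices (each a cross-section of $(n+1)$-volume $\theta^*$) produces a closed region $\widehat{\Omega^+}\subset S^1_{r_2}\times\re^{n+1}$ satisfying $V^{n+1}(\partial\widehat{\Omega^+})\le V^{n+1}(\partial\Omega)+2\theta^*$. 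Applying the Pedrosa--Ritor\'e profile (\ref{Is1}) for $S^1_{r_2}\times\re^{n+1}$ to $\widehat{\Omega^+}$, using a volume lower bound for $\widehat{\Omega^+}$ coming from its tube-regime slice, and invoking the algebraic identity (\ref{alg1}) --- whose factor $\frac{1}{2}$ records the two capping terms being distributed on both sides of the balance --- produces the claimed inequality. The symmetric argument with $F_2$, using (\ref{alg2}) in place of (\ref{alg1}) and with the roles of the two $S^1$-factors swapped, gives $V^{n+1}(\partial\Omega)\ge V^1(S^1_{r_2})\,I_{\re^{n+1}}(\sigma^*)$, and combining the two bounds gives $V^{n+1}(\partial\Omega)\ge K^*$.

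\emph{Main obstacle.} The hard part is Case B: arranging the capping so that $\widehat{\Omega^+}$ is genuinely a closed region of $S^1_{r_2}\times\re^{n+1}$ with controlled volume and boundary, and then verifying that equation (\ref{alg1}) is precisely the algebraic identity needed to convert the Pedrosa--Ritor\'e bound for $\widehat{\Omega^+}$ into the desired lower bound for $V^{n+1}(\partial\Omega)$. The factor $\frac{1}{2}$ in (\ref{alg1})--(\ref{alg2}) is what makes the bookkeeping close, and reverse-engineering the argument to produce exactly the thresholds $\theta^*,\sigma^*$ --- rather than nearby but off-by-constant expressions --- is the subtle step.
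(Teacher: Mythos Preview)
Your Case~A is correct and reproduces, via the coarea slicing inequality, exactly the ``horizontal'' estimate $V^{n+1}(\partial\Omega)\ge V^1(S^1_{r_1})\,I_{\re^{n+1}}(\theta_m)$ that the paper obtains from one of the two terms in Morgan's weighted $2$--dimensional model. The gap is in Case~B. There is no volume lower bound for $\widehat{\Omega^+}$ coming from its ``tube--regime slice'': the interval $[t_1,t_2]$ can have arbitrarily small length while $F_1$ rises sharply from $\theta^*$ past $\beta_n(r_2)$ and back, so $V^{n+2}(\widehat{\Omega^+})$ can be made as small as one likes. Consequently $I_{S^1_{r_2}\times\re^{n+1}}\bigl(V^{n+2}(\widehat{\Omega^+})\bigr)$ can be made arbitrarily small, and the Pedrosa--Ritor\'e profile applied to $\widehat{\Omega^+}$ gives no useful information. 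The capping construction bounds the \emph{boundary} by $V^{n+1}(\partial\Omega)+2\theta^*$, but you then need a lower bound on $V^{n+1}(\partial\widehat{\Omega^+})$ that does not collapse with the volume, and you do not have one.

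What Case~B actually requires is the second, ``vertical'' estimate $V^{n+1}(\partial\Omega)\ge 2(\beta_n(r_2)-\theta_m)$, which measures boundary area created by the \emph{variation} of the slice--volume function $F_1$ rather than by enclosed volume. The paper obtains both estimates simultaneously by passing to Morgan's model $(0,V_1)\times(0,\infty)$ with the weighted perimeter $\sqrt{h(v_2)^2\,dv_1^2+4\,dv_2^2}$: the cross term $h(v_2)\,dv_1$ yields your Case~A bound, while the term $2\,dv_2$ yields the vertical bound directly, since the curve $\delta E$ must span the $v_2$--interval $[\theta_m,\theta_M]\supset[\theta_m,\beta_n(r_2)]$. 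Taking the maximum of these two competing bounds and minimising over the unknown $\theta_m\in(0,\beta_n(r_2))$ is precisely what produces the defining equation for $\theta^*$. If you want to avoid the Morgan model, the vertical bound can be proved directly by projecting $\partial\Omega$ onto $S^1_{r_2}\times\re^n$ and observing that every point of $\Omega_{t_M}\setminus\Omega_{t_m}$ lies under at least two sheets of $\partial\Omega$; but this is a projection/coarea argument, not an application of the isoperimetric profile of the ambient cylinder.
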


\begin{Remark}
	\label{solution}
	 Since $r_1$ and $r_2$ are fixed, equations (\ref{alg1}) and (\ref{alg2}) are algebraic equations   of the type $a \  x^{\frac{n+1}{n+2}}+x=b$, with $a,b,n>0$. It is straightforward to check that a solution exists and is  unique for each equation. 
 \end{Remark}

\begin{proof}


We follow a construction by F. Morgan \cite{Morgan}, which estimates lower bounds for isoperimetric profiles of products. We consider the product of $(S^1_{r_1},dt^2)$ with $(S^1_{r_2}\times \re^n,ds^2+g_E)$. We start by defining a product manifold  $(0,V_1)\times (0,\infty)\subset \re^2$, where $V_1=V^1(S^1_{r_1})$. And we equip this 2-dimensional manifold with a model metric in the sense of the Ros product Theorem (\cite{Ros}).   $(0,V_1)$  and $(0,\infty)$ will have  Euclidean Lebesgue Measure and Riemannian metric $\frac{1}{2}ds$ and $(\frac{1}{h(x)})dr$ respectively, where $h(x)= I_{S_{r_2}^1\times \re^{n+1}}(x)$. 

To show that this is in fact a model metric, it suffices to prove that in each interval, $(0, V_1)$ and $(0, \infty)$, intervals of the type $(0,t)$, $t>0$, minimize perimeter, among closed sets $S$ of given Euclidean length $t$. For the interval $(0, \infty)$ this holds because $h(x)$ is nondecreasing. On the other hand, for the interval $(0,V_1)$, we argue as follows. Suppose $S\subset(0,V_1)$ is a closed set of perimeter $t$, that is not of the type $(0,t)$; then it must be a locally finite collection of closed intervals; then an interior interval must be at least borderline unstable,  because   the factor $\frac{1}{2}$ is constant. We conclude that $S$ does not minimize perimeter.  

Minkowski content on $(0,V_1)$  and $(0,\infty)$ counts boundary points of intervals with density $2$ and $h(x)$, respectively. Similarly, Minkowski content on 
$(0,V_1)\times (0,\infty)$ has perimeter measured by 

\begin{equation}
\label{Morg}
ds^2= h^2(v_2) dv_1^2+ 2^2 dv_2^2.
\end{equation}

It follows from the proof of the Ros Product Theorem that, for any $v>0$,  $I_{S_{r_1}^1\times (S_{r_2}^1 \times \re^n)}(v)$ is bounded from below by the perimeter $P(E)$ of the boundary $\delta E$ of some region $E \subset (0,V_1)\times (0,\infty)$. The area of $E$, $A(E)$, satisfies $v= A(E)$ and $\delta E$ is a connected boundary curve along which $v_2$ is nonincreasing and $v_1$ is nondecreasing. The enclosed region $E$ is on the lower left of $\delta E$.  Hence

 $$P(E)\leq I_{S_{r_1}^1\times (S_{r_2}^1 \times \re^n)}(v)$$
 
where
 
\begin{equation}
\label{PE}
P(E) = \int_{\delta E} \sqrt{ h^2(v_2) dv_1^2 + 2^2 dv_2^2}.
\end{equation}
and the area of the region $E$ is given by
$$A(E) = \int \int_E dv_1 \, dv_2.$$

Since each term in the square root of eq. (\ref{PE})  is non-negative, we have 

\begin{equation}
\label{PE1}
P(E) \geq  2 \int_{\delta E}  dv_2
\end{equation}

and
\begin{equation}
\label{PE2}
P(E) \geq  \int_{\delta E}   h(v_2)   dv_1. 
\end{equation}

Now, using the hypothesis $0<\theta_m<\beta_n(r_2)<\theta_M$, we have from eq. (\ref{PE1})
  $$P(E) \geq  2 (\beta_n(r_2) -\theta_m), $$

and from eq. (\ref{PE2}),

  $$P(E) \geq \min_{v_2} \{h(v_2)\}  \int_{\delta E}      dv_1  \geq I_{S^1\times \re^n} (\theta_m)  \ \ V_1. $$

That is,  

$$P(E)\geq \max \{2 (\beta_n(r_2) -\theta_m), (V_1) \  I_{S_{r_2}^1\times \re^n} (\theta_m) \}= \max \{2 (\beta_n(r_2) - \theta_m), (V_1) \ I_{\re^{n+1}} (\theta_m) \}$$

\noindent where the last  equality follows from the fact that $ I_{S_{r_2}^1\times \re^n} (\theta_m)= I_{\re^{n+1}} (\theta_m)$
(since $\theta_m<\beta_n(r_2)$). Hence, for the isoperimetric region $\Omega$ we have,
$$ \max \{2 (\beta_n(r_2) -\theta_m), V_1 \ I_{  \re^{n+1}} (\theta_m) \}\leq P(E)\leq V^{n+1}(\partial \Omega).$$

By Remark \ref{solution}, there is a unique $\theta^*\in (0, \beta_n(r_2))$ such that satisfies
$2(\beta_n(r_2)- \theta^*)= V_1 I_{\re^{n+1}}(\theta^*),$ which is eq. (\ref{alg1}).
Note also that, as functions of $\theta_m$, $2 (\beta_n(r_2) -\theta_m)$ is decreasing while  $V_1 \ I_{  \re^{n+1}} (\theta_m)$ is increasing.  This yields 
 
 \begin{equation}
 \label{alga}
 V_1 \ I_{  \re^{n+1}} (\theta^*) \leq \max \{2 (\beta_n(r_2) -\theta_m), V_1 \ I_{  \re^{n+1}} (\theta_m) \} \leq P(E)\leq V^{n+1}(\partial \Omega),
 \end{equation}

\noindent regardless of the value of $\theta_m$. One may obtain a similar result, 
 \begin{equation}
\label{algb}
V^1(S^1_{r_2}) \ I_{  \re^{n+1}} (\sigma^*) \leq P(E)\leq V^{n+1}(\partial \Omega),
\end{equation}

\noindent being $\sigma^*\in (0, \beta_n(r_1))$, such that satisfies
eq. (\ref{alg2}), by following the same analysis for the product of $(S^1_{r_2},dt^2)$ with $(S^1_{r_1}\times \re^n,ds^2+g_E)$ and using the hypothesis $0<\sigma_m<\beta_n(r_1)<\sigma_M$. 

Since both eqs. (\ref{alga}) and  (\ref{algb}) occur, the conclusion of the Lemma follows.

\end{proof}

We now prove some lower bounds  for $V^{n+1}(\partial \Omega)$ for  the case where $0<\theta_m<\beta_n(r_2)<\theta_M$ and $0<\sigma_m<\beta_n(r_1)<\sigma_M$ occur.
\begin{Lemma}
	\label{4}
	
	Suppose  $0<\theta_m<\beta_n(r_2)<\theta_M$, and $0<\sigma_m<\beta_n(r_1)<\sigma_M$. Then
	
\begin{equation}
\label{7a}
I_{S^1_{r_2}\times \re^{n+1}}(v)- 2\beta_n(r_2) \leq I_{S^1_{r_1}\times S^1_{r_2}\times \re^n}(v)\end{equation}
	and
\begin{equation}
\label{7b}I_{S^1_{r_1}\times \re^{n+1}}(v)- 2\beta_n(r_1) \leq  I_{S^1_{r_1}\times S^1_{r_2}\times \re^n}(v)\end{equation}

\end{Lemma}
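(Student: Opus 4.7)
Both inequalities follow from a single ``unrolling'' argument applied to different $S^1$ factors.

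For (\ref{7a}) I proceed as follows. Since $n+2\leq 7$, Theorem \ref{existence} guarantees that $\partial \Omega$ is smooth, so for almost every $t_0\in S^1_{r_1}$ the intersection $\partial \Omega \cap (\{t_0\}\times S^1_{r_2}\times \re^n)$ has $V^{n+1}$-measure zero. Combined with continuity of $F_1$, this allows me to pick, for any $\varepsilon>0$, such a generic $t_0$ with $F_1(t_0)<\theta_m+\varepsilon$. I cut $S^1_{r_1}$ at $t_0$, replacing it isometrically by the interval $[0,V_1]$, and view the image $\Omega'\subset [0,V_1]\times S^1_{r_2}\times \re^n$ inside $\re\times S^1_{r_2}\times \re^n\cong S^1_{r_2}\times \re^{n+1}$.

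The volume is preserved and the boundary gains exactly two copies of the slice $\Omega_{t_0}$, one at each endpoint of the cut, so
$$V^{n+2}(\Omega')=v, \qquad V^{n+1}(\partial \Omega')=V^{n+1}(\partial \Omega)+2F_1(t_0).$$
Since $\Omega'$ is a closed region in $S^1_{r_2}\times \re^{n+1}$ of volume $v$, the definition of isoperimetric profile gives $V^{n+1}(\partial \Omega')\geq I_{S^1_{r_2}\times \re^{n+1}}(v)$; rearranging and letting $\varepsilon\to 0$ yields
$$V^{n+1}(\partial \Omega)\geq I_{S^1_{r_2}\times \re^{n+1}}(v)-2\theta_m,$$
and the hypothesis $\theta_m<\beta_n(r_2)$ gives (\ref{7a}).

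For (\ref{7b}) I apply the symmetric argument: cut $S^1_{r_2}$ at a generic $s_0$ with $F_2(s_0)<\sigma_m+\varepsilon$, embed into $S^1_{r_1}\times \re^{n+1}$, and use $\sigma_m<\beta_n(r_1)$.

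\textbf{Main obstacle.} I anticipate no serious difficulty. The only care needed is in verifying that the cut is clean, i.e.\ that a generic $t_0$ with $F_1(t_0)$ arbitrarily close to $\theta_m$ can be chosen so that $\partial \Omega \cap (\{t_0\}\times S^1_{r_2}\times \re^n)$ has zero $(n+1)$-measure; this follows from the smoothness of $\partial\Omega$ (granted by $n+2\leq 7$) together with continuity of $F_1$. Note that the hypotheses $\theta_M>\beta_n(r_2)$ and $\sigma_M>\beta_n(r_1)$ play no role in the argument; they are retained from the case analysis of the preceding lemmas, and in fact the stronger bounds $V^{n+1}(\partial\Omega)\geq I_{S^1_{r_j}\times\re^{n+1}}(v)-2\theta_m$ (resp.\ $-2\sigma_m$) hold without using them.
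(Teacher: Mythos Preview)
Your proof is correct and follows essentially the same ``unrolling'' argument as the paper: cut the relevant $S^1$ factor at (or near) the minimizing slice, embed into $S^1_{r_j}\times\re^{n+1}$, and pay the cost of two caps of size at most $\theta_m$ (resp.\ $\sigma_m$), then use $\theta_m<\beta_n(r_2)$ (resp.\ $\sigma_m<\beta_n(r_1)$). Your extra care in choosing a generic $t_0$ with $F_1(t_0)<\theta_m+\varepsilon$ and passing to the limit is a mild refinement over the paper, which simply cuts at $t_m$ directly; you are also right that $\theta_M,\sigma_M$ play no role here.
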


\begin{proof}
	We construct a new closed region $\Omega^* \subset [t_m, t_{m+2 \pi r_1}] \times S_{r_2}^1\times \re^{n} \subset \re \times S_{r_2}^1\times \re^{n}$, with $t_m$ such that $F_1(t_m)= \theta_m >0$, in the following way. For $t \in (0, 2 \pi r_1)$ let $\Omega^*_{t_m+t} = \Omega_{t}$. Also, let $\Omega^*_{t_m+2 \pi r_1} = \Omega_{t_m}$ and  $\Omega^*_{t_m} =\Omega_{t_m}$.
	
	Note that  $V^{n+2}( \Omega^*)=V^{n+2}( \Omega)$ and $V^{n+1}( \partial \Omega^*)=V^{n+1}(\partial \Omega)+ 2 V^{n+1}(\Omega_{t_m})=V^{n+1}(\partial \Omega)+ 2  \theta_m $.
		
	Let $v=V^{n+2}(\Omega)$. Since $\Omega^*$ is actually a closed set in $[t_m, t_{m+2 \pi r_1}] \times S_{r_2}^1\times \re^{n} \subset \re \times S_{r_2}^1\times \re^{n}$, it follows that 
	$$I_{S_{r_2}^1\times \re^{n+1}}(v)\leq V^{n+1}(\partial \Omega^*)=V^{n+1}(\partial \Omega)+ 2   \theta_m $$
	
	Since $   \theta_m < \beta_n(r_2)$, and $\Omega$ is isoperimetric, we have 
	
	$$I_{S^1_{r_2}\times \re^{n+1}}(v)- 2\beta_n(r_2)<I_{S^1_{r_2}\times \re^{n+1}}(v)- 2\theta_m \leq V^{n+1}( \partial \Omega)= I_{S^1_{r_1}\times S^1_{r_2}\times \re^n}(v).$$
	
	

	Similarly,  using  $0<\sigma_m<\beta_n(r_1)<\sigma_M$, one may embed $\Omega$ in $S_{r_1}^1\times \re^{n+1}$ by replacing $\Omega_{s_m}$ with 2 copies of $\Omega_{s_m}$, where $F(s_m)=\sigma_m$. Following the same argument as before, one gets
	
		$$I_{S^1_{r_1}\times \re^{n+1}}(v)- 2\beta_n(r_1) \leq V^{n+1}(\partial \Omega)= I_{S^1_{r_1}\times S^1_{r_2}\times \re^n}(v).$$

\end{proof}

We now prove a straightforward lemma that will be useful.

\begin{Lemma}
	\label{algebraic}
	Let $a, b>0$, $n\in \mathbb{N}, n>1$. Consider  the function $\varphi(x)=\  x^{\frac{n}{n+1}} -a\  x^{\frac{n-1}{n}}$.  
	There is a unique $x_0>0$ such that $\varphi(x_0)=0$ and $\varphi(x)>0$ for $x>x_0$.
	\noindent	There is a unique  $x_1>0$ such that $\varphi(x_1)=b$ and $\varphi(x)>b$ for $x>x_1$. Moreover $x_0<x_1$.
\end{Lemma}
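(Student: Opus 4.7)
The plan is to factor $\varphi$ and reduce the first assertion to a trivial algebraic identity, then study the sign of $\varphi'$ on $(0,\infty)$ to deduce the monotonicity needed for the second assertion.

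First, I would observe the exponent identity
\[
\frac{n}{n+1}-\frac{n-1}{n}=\frac{n^{2}-(n-1)(n+1)}{n(n+1)}=\frac{1}{n(n+1)},
\]
which allows one to write
\[
\varphi(x)=x^{\frac{n-1}{n}}\bigl(x^{\frac{1}{n(n+1)}}-a\bigr).
\]
From this factorization the first assertion is immediate: the only positive zero of $\varphi$ is $x_{0}=a^{\,n(n+1)}$, and $\varphi(x)>0$ precisely when $x^{1/(n(n+1))}>a$, i.e.\ when $x>x_{0}$. In particular $\varphi(x)<0$ on $(0,x_{0})$, so the equation $\varphi(x)=b$ with $b>0$ has no solution there.

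For the second assertion I would differentiate,
\[
\varphi'(x)=\frac{n}{n+1}\,x^{-\frac{1}{n+1}}-a\,\frac{n-1}{n}\,x^{-\frac{1}{n}},
\]
and set $\varphi'(x)=0$ to find the unique critical point at
\[
x_{c}=\left(a\,\frac{(n-1)(n+1)}{n^{2}}\right)^{n(n+1)}=\left(a\bigl(1-\tfrac{1}{n^{2}}\bigr)\right)^{n(n+1)}.
\]
Since $1-1/n^{2}<1$, we have $x_{c}<x_{0}$. A direct sign check (using that the exponent $-1/(n+1)$ is larger than $-1/n$) shows $\varphi'>0$ on $(x_{c},\infty)$, so $\varphi$ is strictly increasing on $[x_{0},\infty)$, with $\varphi(x_{0})=0$ and $\varphi(x)\to\infty$ as $x\to\infty$. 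By the intermediate value theorem there is a unique $x_{1}>x_{0}$ with $\varphi(x_{1})=b$, and $\varphi(x)>b$ for $x>x_{1}$; combined with the sign analysis on $(0,x_{0})$ this gives uniqueness on all of $(0,\infty)$, as well as $x_{0}<x_{1}$.

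The only mildly delicate step is verifying that the critical point $x_{c}$ lies strictly below $x_{0}$ and that $\varphi'$ does not change sign again on $(x_{c},\infty)$; both follow from the explicit formula for $\varphi'$ and the comparison of the two exponents. Everything else is routine calculus applied to the factored form of $\varphi$.
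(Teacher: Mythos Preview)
Your proof is correct and rests on the same derivative computation as the paper's: both locate the unique critical point $x_{c}=\bigl(a\,\tfrac{(n-1)(n+1)}{n^{2}}\bigr)^{n(n+1)}$ and use that $\varphi$ is decreasing on $(0,x_{c})$ and increasing on $(x_{c},\infty)$ to conclude. The one genuine difference is your factorization $\varphi(x)=x^{(n-1)/n}\bigl(x^{1/(n(n+1))}-a\bigr)$, which the paper does not employ. This buys you an explicit formula $x_{0}=a^{n(n+1)}$ and an immediate sign determination of $\varphi$ on $(0,x_{0})$, so your first assertion is purely algebraic; the paper instead deduces the existence and uniqueness of $x_{0}$ from the global shape ($\varphi(0)=0$, $\varphi$ dips negative, then increases to $+\infty$). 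For the second assertion the two arguments coincide. Your route is marginally more informative since it identifies $x_{0}$ exactly and makes the inequality $x_{c}<x_{0}$ transparent via $1-1/n^{2}<1$, but the analytic core is the same.
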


\begin{proof}
	For the first claim, we note that for $x>0$, $\varphi'(x)=0$ if and only if $$x= \left(\frac{(n-1)(n+1)}{n^2} \ a\right)^{n(n+1)}.$$
	Note that $\varphi(0)=0<b$, and  $\varphi(x)$ is decreasing for $x>0$ until $x_1= (\frac{(n-1)(n+1)}{n^2} \ a)^{n(n+1)}>0$ and increasing after that. This implies the first and second claims.
	
	For the third claim it suffices to remark that $\varphi(x)$ is still increasing after $x_0$ and that $\varphi(x_1)<0=\varphi(x_0)<b=\varphi(x_1)$; since $b>0$. It follows that $x_0<x_1$.
	
\end{proof}

\begin{Remark}
	\label{algebraic2}	
	
	Note that since $r_1$ and $r_2$ are fixed, equations (\ref{algc}) and (\ref{algd}) are algebraic equations on $v$ of the type $\    v^{\frac{n}{n+1}} -a\  v^{\frac{n-1}{n}}=b$, where $a, b, n>0$.
	By Lemma \ref{algebraic}  they have  unique solutions $v_i^{**}>0$; and $I_{S^1_{r_i}\times \re^{n+1}}(v)- f_n(v) > 2 \beta_n(r_i)$ for $v>v_i^{**}$, for $i=1,2$. Also,   $I_{S^1_{r_i}\times \re^{n+1}}(v)- f_n(v)=0$ has a unique solution $v_{0_i}>0$, and 	$I_{S^1_{r_i}\times \re^{n+1}}(v)> f_n(v)$ for $v>v_{0_i}$, for $i=1,2$.   Lemma \ref{algebraic} also implies $v_{0_i}<v_i^{**}$.
\end{Remark}

We now prove Theorem \ref{T2}.

\begin{proof}

Let $\Omega \subset S^1_{r_1} \times S^1_{r_2} \times \re^n$ be an isoperimetric region. Consider the functions $F_1, F_2$ and the values $\theta_m, \theta_M, \sigma_m, \sigma_M$ as before. 

We begin with the case of big volumes. Let  $v^{**}= \max\{a_n, b_n\}$, where $a_n$ is such that 
\begin{equation}
\label{algc}
I_{S^1_{r_1}\times \re^{n+1}}(a_n)- f_n(a_n) = 2\beta_n(r_1), 
\end{equation}
and  $b_n$ such that
\begin{equation}
\label{algd}
I_{S^1_{r_2}\times \re^{n+1}}(b_n)- f_n(b_n) = 2\beta_n(r_2). 
\end{equation}

By Remark \ref{algebraic2}, for $v>v^{**}$, 
\begin{equation}
I_{S^1_{r_i}\times \re^{n+1}}(v)- f_n(v) > 2\beta_n(r_i),
\end{equation}
for $i=1,2.$ Hence, Lemma \ref{4} excludes the case $0 < \theta_m < \beta_n(r) < \theta_M$ and $0 < \sigma_m < \beta_n(r) < \sigma_M$. Remark \ref{algebraic2} also states that $v^{**} >v_0$, where $v_0=\max\{v_{0_1},v_{0_2}\}$ and $v_{0_i}$ is the unique $v$ such that $I_{S^1_{r_i}\times \re^{n+1}}(v) = f_n(v).$
This implies that for $v>v^{**}>v_0$ 
\begin{equation}
\label{excludes}
I_{S^1_{r_i}\times \re^{n+1}}(v) > f_n(v).
\end{equation}

Since $I_{S^1_{r_1} \times S^1_{r_2} \times \re^{n}}(v) \leq f_n(v)$ for all $v\geq0$, inequality (\ref{excludes})  excludes the following cases, if $V^{n+2}(\Omega)>v^{**}$:
\begin{enumerate}
	\item $\theta_m=0$ or $\sigma_m = 0$, by Lemma \ref{vol0}.
	\item $\theta_M < \beta_n(r_2)$ or $\theta_M < \beta_n (r_1)$, by Lemma \ref{vol1}.
\end{enumerate}

Thus, the only case left if $v=V^{n+2}(\Omega)>v^{**}$ is $\theta_m \geq \beta_n(r_2)$ or $\sigma_m \geq \beta_n(r_1)$, which implies $I_{S^1_{r_1} \times S^1_{r_2} \times \re^{n}}(v) = f_n(v)$
by Lemma \ref{vol2}.

We now treat the case of small volumes.

Let $v_s = \min \{V^{n+2} (S^1_{r_1} \times B^{n+1}_{\pi r_2}), V^{n+2} (B^{n+2}_{\pi r_1}) \}$.

Isoperimetric regions in $S_{r_1}^1\times \re^{n+1}$ are either regions of the type $B^{n+2}_{R}$ or $S_{r_1}^1\times B^{n+1}_R$, which are realizable in 
$S_{r_1}^1\times S_{r_2}^1 \times \re^{n+1}$ if $R < \pi r_1 \leq \pi r_2$.
This implies  
$I_{S_{r_1}^1\times S_{r_2}^1 \times \re^{n+1}} (v) \leq I_{S_{r_1}^1 \times \re^{n+1}}(v)$ for $v \leq v_s$.

Note that for $R < \pi r_2$, $S^1_{r_1} \times B^{n+1}_{R}$ is a closed region in $S^1_{r_1} \times S^1_{r_2} \times \re^n$. 

Hence for $v < v_s$ 
$$I_{S^1_{r_1} \times S^1_{r_2} \times \re^{n}}(v) \leq I_{S^1_{r_1} \times \re^{n+1}}(v).$$

Lemma \ref{vol0}  implies  that if $\theta_m=0$ or $\sigma_m=0$, then for $v<v_s$,
\begin{equation}
\label{lemmas}
I_{S^1_{r_1} \times \re^{n+1}}(v) \leq I_{S^1_{r_1} \times S^1_{r_2} \times \re^{n}}(v) \leq I_{S^1_{r_1} \times \re^{n+1}}(v).
\end{equation}

By Lemma \ref{vol1},  for $v<v_s$, these inequalities are also satisfied if $\theta_M \leq \beta_n(r_2)$ or $\sigma_M \leq \beta_n(r_1)$.

Note also that for $v < \min \{ v_s, v_{0_1}\}$, Lemma \ref{vol2} excludes the case $\theta_m \geq \beta_n(r_2)$ or $\sigma_m \leq \beta_n(r_1)$. Otherwise we would have 

$$I_{S^1_{r_1} \times \re^{n+1}}(v) > f_n(v) = I_{S^1_{r_1} \times S^1_{r_2} \times \re^{n}}(v) < I_{S^1_{r_1} \times \re^{n+1}}(v).$$

Finally, let $c_n$ be such that
$$ I_{S^1_{r_1} \times \re^{n}}(c_n)=K^*,$$
where $K^*$ is the constant defined in Lemma \ref{tm}.

Let $v_n^* = \min \{v_s, c_n, v_{0_1}\}$. Then, for $v<v_n^*$, 
$$I_{S^1_{r_1} \times S^1_{r_2} \times \re^{n}}(v) \leq I_{S^1_{r_1} \times \re^{n+1}}(v) < K^*.$$
 By Lemma \ref{tm}, this implies that  the case $0 \leq \theta_m \leq \beta_n(r_2) \leq \theta_M$ and $0 \leq \sigma_m \leq \beta_n(r_1) \leq \sigma_M$ is excluded for $v<v_n^*$.

We conclude that for $v<v_n^*$,
$$I_{S^1_{r_1} \times S^1_{r_2} \times \re^{n}}(v)= I_{S^1_{r_1} \times \re^{n+1}}(v).$$
\end{proof}

We now use these results to compute explicit lower bounds for the isoperimetric profile of a manifold of the type $(T^2\times \re^2, h_2+g_2)$.
\begin{figure}
	
	\includegraphics[scale=.3]{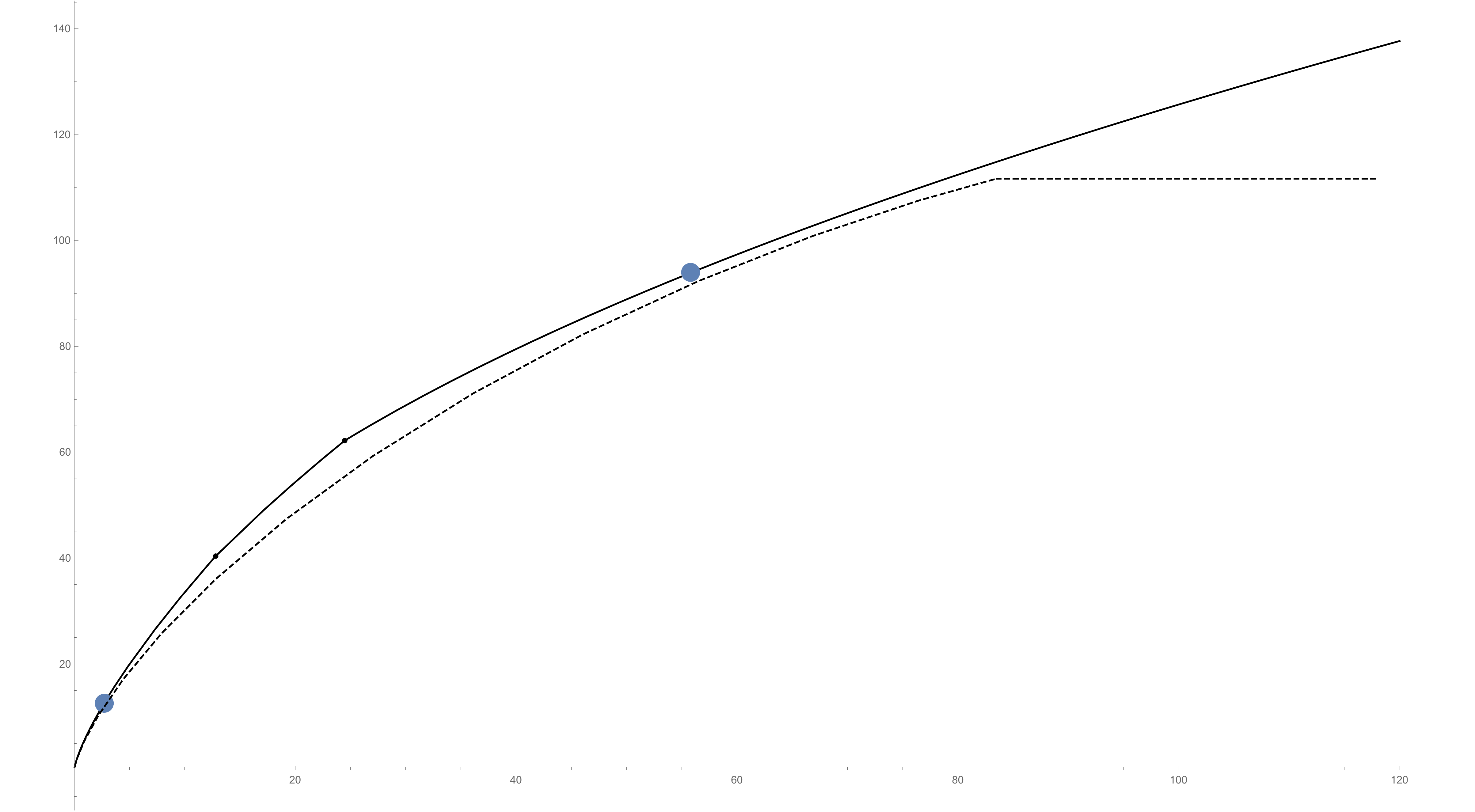}
	\caption{$I_{scp}(v)$ profile (solid line) is an upper bound for $I_{T^2\times \re^2}(v)$. The profile $I_{S_3^3\times \re}(v)$ (dashed line) is a lower bound for $I_{T^2\times \re^2}(v)$. Before $v_2^*\approx 2.7$ and after $v^{**}\approx 55.8$, the isoperimetric profile of $(T^2\times \re^2)$ is equal to the $I_{scp}(v)$ profile.}
	\label{isopa}
\end{figure}

\begin{Example}
	\label{example}
 Let $(\re^2,g_E)$ be the 2-dimensional Euclidean space and $(T^2,h)$ be the 2-Torus with a flat metric, isometric to $\re / \Gamma$ with $\Gamma$ the lattice generated by $\{\{2{\sqrt\pi},0\},\{0,2{\sqrt \pi}\}\}$.	Using Theorem \ref{T2} and its proof, one may make explicit estimates of  the isoperimetric profile of $(T^2\times \re^2, h+g_2)$.
\end{Example}

We are in the case $r_1=r_2=\frac{1}{\sqrt\pi}$, $n=2$ of Theorem \ref{T2}.   By solving eqs. (\ref{algc}) and  (\ref{algd})  we get $v_2^{**}\approx  55.84$. We also compute $v_2^*\approx 2.70$, using equations (\ref{K}),  (\ref{alg1}) and (\ref{alg2}). The $I_{scp}(v)$ profile, given by $I_{scp}(v)=\min\{I_{S_{r_1}^1\times \re^{3}}(v), f_2(v)\}$, is an upper bound for $I_{T^2 \times \re^{n}}(v)$, moreover, if  $v\leq v_2^*$ or   $v\geq v_2^{**}$, then  $I_{T^2 \times \re^{2}}(v)=I_{scp}(v)$. The solid line graphic of figure \ref{isopa} is the graphic of  $I_{scp}(v)$.  In this case $f_2(v)=4 \pi   \sqrt{v}$.

One may compute  lower bounds for the volumes between $v_2^*$ and $v_2^{**}$. First, since the  Ricci curvature of  $(T^2\times \re^2,h+g_E)$ is non-negative, it follows from a result by V. Bayle (\cite{Bayle}, p. 52) that the isoperimetric profile is concave.  This implies that a line joining the points $(v^{*},  I_{scp}(v^{*}))$ and  $(v^{**},  I_{scp}(v^{**}))$ is also a lower bound for $I_{T^2\times \re^2}(v)$.

A better lower bound for $I_{T^2\times \re^2}(v)$ may be computed in the following way. Since the isoperimetric profiles of $(S^2,g_2)$ and $(T^2,h)$ are known explicitly, it is straightforward to check $I_{S^2} \leq I_{T^2}$. Here, $g_2$ is the round metric with radius $r=1$. Since $S^2$ is a model metric, it follows from the Ros symmetrization Theorem \cite{Ros}, that $I_{S^2\times \re^2}\leq I_{T^2\times \re^2}$.  On the other hand, it was proved in  section 2.1 of \cite{Petean2} that  $I_{S_3^3\times \re}\leq I_{S^2\times \re^2}$, where $(S_3^3,g_3)$ is the 3-sphere with the round metric and radius $r=3$.  It follows that 
$I_{S^3_3\times \re} \leq I_{T^2\times \re^2}$. The isoperimetric profile of $I_{S^3_3\times \re}$ was computed in \cite{Pedrosa} and its graphic corresponds to the the dashed graphic of figure \ref{isopa}. Moreover, using that the isoperimetric profile of  $(T^2\times \re^2,h+g_E)$ is concave, it follows that any line joining the point $(v^{**}, I_{scp}(v^{**}))$ and the graphic of  $I_{S_3^3\times \re}(v)$ is also a lower bound for $I_{T^2\times \re^2}(v)$. Similarly, any line joining the point $(v^{*}, I_{scp}(v^{*}))$ and the graphic of  $I_{S_3^3\times \re}(v)$ is also a lower bound for $I_{T^2\times \re^2}(v)$.

  These lower bounds gives us a fair idea of the shape of $I_{T^2\times \re^2}$  in the interval $(v_2^*,v_2^{**})$ and are illustrated in figure \ref{isopprofT2R2}.

\section{The isoperimetric profile of $T^k\times \re^n$}

One may  follow the arguments of the last section in order to understand the isoperimetric profile of $T^k\times \re^n$ for small  and big volumes. In this section we present the proof of Theorem \ref{T3}, that is, the case  $k=3$. The more general case, $2\leq k\leq5$, $2\leq n\leq 7-k$, is similar.

Let $(T^3,h_3)= (S_{r_1}^1\times S_{r_2}^1 \times S_{r_3}^1, ds_1^2+ds_2^2+ds_3^3)$.
Let $\Omega$ be an isoperimetric region in $(T^3\times \re^n,h_3+g_E)$. We parameterize $S_{r_3}^1$, by $[0,2 \pi r_3)$ and consider  slices $\Omega_{t}$, $t\in [0,2\pi r_3)$:

$$\Omega_{t}=\Omega \cap(  S^1_{r_1}\times S^1_{r_2}\times \{t\} \times  \re^n).$$

\noindent Then for each slice $\Omega_{t}$,   we may compute its $(n+2)$-volume and define a function $F:[0,2 \pi \ r_3]\rightarrow \re$, by $F(t)=V^{n+2}(\Omega_{t})$ and $F(2 \pi r_3)=V^{n+2}(\Omega_0)$.

Note that  $F$ is continuous.
Let $\eta_{m}$ and $\eta_{M}$ denote the minimum and maximum values of $F$, respectively.

\begin{Lemma}
	\label{kvol0}
	If  $\eta_{m} =0$, then $ I_{ S^1_{r_1}\times S^1_{r_2}\times \re^{n+1}}(V^{n+3}(\Omega)) \leq V^{n+2}(\partial \Omega)$.
\end{Lemma}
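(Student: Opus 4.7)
The plan is to mimic the proof of Lemma \ref{vol0} almost verbatim, but with the slicing now taken along the $S^1_{r_3}$ factor, so that each slice lives in $S^1_{r_1}\times S^1_{r_2}\times \re^n$ rather than in $S^1_{r_2}\times \re^n$. Concretely, I would start by choosing $t_0 \in [0, 2\pi r_3]$ with $F(t_0) = \eta_m = 0$, and use the parametrization of $S^1_{r_3}$ by $[0, 2\pi r_3]$ to ``cut'' the torus open at the zero slice.

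The key step is the construction of a surrogate region $\Omega^{*} \subset [t_0, t_0 + 2\pi r_3] \times S^1_{r_1} \times S^1_{r_2} \times \re^n$. I would define $\Omega^{*}_{t_0 + t} = \Omega_t$ for $t \in [0, 2\pi r_3)$ and then attach an extra copy $\Omega^{*}_{t_0 + 2\pi r_3} = \Omega_{t_0}$ at the right endpoint. Since $V^{n+2}(\Omega_{t_0}) = 0$ by hypothesis, this added slice contributes nothing to either volume or boundary area, so
\[
V^{n+3}(\Omega^{*}) = V^{n+3}(\Omega), \qquad V^{n+2}(\partial \Omega^{*}) = V^{n+2}(\partial \Omega).
\]
Continuity of $F$ guarantees that $\Omega^{*}$ is a bona fide closed region of $[t_0, t_0 + 2\pi r_3] \times S^1_{r_1} \times S^1_{r_2} \times \re^n$, which embeds isometrically as a closed region in $\re \times S^1_{r_1} \times S^1_{r_2} \times \re^n \cong S^1_{r_1} \times S^1_{r_2} \times \re^{n+1}$.

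Applying the definition of the isoperimetric profile of the ambient space to $\Omega^{*}$ then yields
\[
I_{S^1_{r_1}\times S^1_{r_2}\times \re^{n+1}}\bigl(V^{n+3}(\Omega)\bigr) = I_{S^1_{r_1}\times S^1_{r_2}\times \re^{n+1}}\bigl(V^{n+3}(\Omega^{*})\bigr) \leq V^{n+2}(\partial \Omega^{*}) = V^{n+2}(\partial \Omega),
\]
which is the desired inequality. There is no genuine obstacle here: unlike Lemma \ref{vol0}, where one also had to invoke monotonicity in the radius parameter to pass from $r_2$ to $r_1$, the statement here already refers to the product $S^1_{r_1}\times S^1_{r_2}\times \re^{n+1}$ that arises naturally from the cutting construction, so no further comparison is needed.
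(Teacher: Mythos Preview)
Your proposal is correct and follows essentially the same argument as the paper: both cut $S^1_{r_3}$ open at a slice of zero $(n+2)$-volume, transplant $\Omega$ into $S^1_{r_1}\times S^1_{r_2}\times [t_0,t_0+2\pi r_3]\times \re^{n}\subset S^1_{r_1}\times S^1_{r_2}\times \re^{n+1}$ without changing volume or boundary area, and then invoke the definition of the isoperimetric profile. Your observation that no radius comparison step is required here (in contrast to Lemma~\ref{vol0}) is also exactly right.
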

\begin{proof}
	Suppose	$\eta_{m}=0$.  Let $t_0 \in [0, 2 \pi r_3]$ be such that $F(t_0)=\eta_{m}=0$.
	We construct a new closed region $\Omega^* \subset  S_{r_1}^1 \times S_{r_2}^1\times [t_0,t_0+2 \pi r_3]\times\re^{n}$. We denote $\Omega^* \cap(  S^1_{r_1}\times S^1_{r_2}\times \{t\} \times  \re^n)$ by $\Omega^*_{t}$.
	For $t\in [0, 2\pi r_3)$, let $\Omega^*_{t_0+t}=\Omega_t$. Also, let $\Omega^*_{t_0+2\pi r_3}=\Omega_{t_0}$.  $\Omega^*$ is a closed region by continuity of $F$. Also, 	since  $V^{n+2}(\Omega_{t_0})=0$ we have  $V^{n+2}(\partial \Omega)=V^{n+2}(\partial\Omega^*)$ and $V^{n+2}(\Omega)=V^{n+2}(\Omega^*)$.  Hence 
		\begin{equation}
	\label{firstb}
	I_{ S^1_{r_1}\times S^1_{r_2}\times \re^{n+1}}(V^{n+3}(\Omega))=I_{ S^1_{r_1}\times S^1_{r_2}\times \re^{n+1}}(V^{n+3}(\Omega^*))\leq V^{n+2}(\partial \Omega^*) = V^{n+2}(\partial\Omega).
	\end{equation}
\end{proof}

Let $w_n^*=\min\{v_n^*, \beta_{n+1}(r_1)\}$, where $v_n^*$ is as in the hypothesis of Theorem \ref{T3} and $\beta_{n+1}(r_1)$ as in eq. (\ref{Is1}). Hence, for $v<w^*$ we have 
\begin{equation}
\label{w}
I_{S^1_{r_1}\times S^1_{r_2}\times \re^n}(v)=I_{S^1_{r_1} \times \re^{n+1}}(v)=I_{\re^{n+2}}(v)
\end{equation}
\begin{Lemma}
	\label{kvol1}
	If $\eta_M \leq w^*_n$, then $ I_{S^1_{r_3}\times \re^{n+2}}(V^{n+3}(\Omega)) \leq V^{n+2}(\partial \Omega)$.
\end{Lemma}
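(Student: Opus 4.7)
The plan is to mirror the argument of Lemma \ref{vol1}: symmetrize each slice $\Omega_t \subset S^1_{r_1}\times S^1_{r_2}\times\{t\}\times\re^n$ into a Euclidean $(n+2)$-ball of the same $(n+2)$-volume, and then recognize the resulting region as a closed region in $S^1_{r_3}\times \re^{n+2}$, obtaining the desired inequality from the definition of the isoperimetric profile.

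First I would set up the slice replacement. Since $\eta_M \leq w_n^* \leq \beta_{n+1}(r_1) \leq \beta_{n+1}(r_2)$, for every $t \in [0,2\pi r_3)$ we have $V^{n+2}(\Omega_t) \leq w_n^*$, so the Euclidean $(n+2)$-ball $B^{n+2}_{R(t)}$ of volume $V^{n+2}(\Omega_t)$ has radius $R(t)<\pi r_1\leq \pi r_2$ and can be embedded as a closed region in $S^1_{r_1}\times S^1_{r_2}\times\{t\}\times\re^n$ (centered, for definiteness, at a fixed basepoint on the torus factor). Define $\Omega^*$ by setting $\Omega^*_t=\{t\}\times B^{n+2}_{R(t)}$ for each $t$. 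Continuity of $F$ makes $R(t)$ continuous, so $\Omega^*$ is a closed region, and $V^{n+3}(\Omega^*)=V^{n+3}(\Omega)$ by construction. Applying equation (\ref{w}) slice-by-slice gives
$$V^{n+1}(\partial\Omega^*_t) = I_{\re^{n+2}}(V^{n+2}(\Omega_t)) = I_{S^1_{r_1}\times S^1_{r_2}\times\re^n}(V^{n+2}(\Omega_t)) \leq V^{n+1}(\partial\Omega_t),$$
and the Ros product symmetrization in the $S^1_{r_3}$-direction (exactly as in Lemma \ref{vol1}) yields $V^{n+2}(\partial\Omega^*)\leq V^{n+2}(\partial\Omega)$.

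Finally, since every slice $\Omega^*_t$ is a Euclidean ball of radius $R(t)<\pi r_1$ placed at the fixed basepoint, the whole of $\Omega^*$ embeds isometrically as a closed region in $S^1_{r_3}\times\re^{n+2}$. Hence
$$I_{S^1_{r_3}\times\re^{n+2}}(V^{n+3}(\Omega)) = I_{S^1_{r_3}\times\re^{n+2}}(V^{n+3}(\Omega^*)) \leq V^{n+2}(\partial\Omega^*) \leq V^{n+2}(\partial\Omega),$$
which is the claimed bound.

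The main obstacle is justifying the slice symmetrization: the fiber $S^1_{r_1}\times S^1_{r_2}\times\re^n$ is not a model space in the sense of the Ros product theorem, so one cannot directly invoke it for slice replacement into balls. This is precisely the role of the threshold $w_n^*=\min\{v_n^*,\beta_{n+1}(r_1)\}$ — below it, the isoperimetric profile of the fiber coincides with $I_{\re^{n+2}}$ (using the inductive hypothesis of Theorem \ref{T3} together with (\ref{Is1})), and the extremal regions are honest Euclidean balls that actually fit inside the fiber. Once this is observed, the coarea/Ros-type argument of Lemma \ref{vol1} carries over verbatim.
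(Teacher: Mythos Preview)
Your proposal is correct and follows essentially the same approach as the paper: symmetrize each slice into a Euclidean $(n+2)$-ball using the hypothesis $\eta_M\leq w_n^*$ (so that balls are isoperimetric in the fiber by eq.~(\ref{w})), apply the Ros-type product inequality along $S^1_{r_3}$, and read off the bound from the isoperimetric profile of $S^1_{r_3}\times\re^{n+2}$. Your discussion of why $w_n^*$ is the right threshold (and in particular why the ball actually fits in the fiber, since $w_n^*\leq\beta_{n+1}(r_1)$ forces $R(t)\leq\pi r_1$) is more explicit than the paper's, but the underlying argument is identical.
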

\begin{proof}
	We symmetrize $\Omega$ by constructing a  region $\Omega^*$ as in the proof of Lemma \ref{vol1}.
	We denote $\Omega^* \cap(  S^1_{r_1}\times S^1_{r_2}\times \{t\} \times  \re^n)$ by $\Omega^*_{t}$.
	Let  $\Omega^*_t=\{t\}\times B_{R(t)}^{n+2}$, where $R(t)>0$ is such that  $V^{n+2}(\{t\}\times B_{R(t)}^{n+2})=V^{n+2}(\Omega_t)$.
	
	Note that 
	$$V^{n+2}(\Omega^*_{t})= V^{n+2}(\Omega_t)$$
	
	\noindent and since $\eta_M<w_n^*$, each region $\{t\}\times B_{R(t)}^{n+1}$ is isoperimetric in 
	$\{t\}\times S_{r_1}^1\times S_{r_2}^1\times \re^n$, that is $V^{n+1}(\partial \Omega_t^*)\leq V^{n+1}(\partial \Omega_t)$.
	
	Arguing as in the proof of the Ros product Theorem (\cite{Ros}), we get
	
	$V^{n+3}(\Omega^*)= V^{n+3}(\Omega)$ and $V^{n+2}(\partial \Omega^*)\leq V^{n+2}(\partial \Omega).$
	
	This implies
	$$I_{S^1_{r_3}\times \re^{n+2}}(V^{n+3}(\Omega))=I_{S^1_{r_3}\times \re^{n+2}}(V^{n+3}(\Omega^*))\leq  V^{n+2}(\partial \Omega^*) \leq V^{n+2} (\partial \Omega).$$

\end{proof}

Let $g_n$ be the function given by $g_n (v) = V^{n+2}(\partial(S_{r_1}^1\times S_{r_2}^1\times S_{r_3}^1\times B^n_{R}))$, where $R$ is such that $V^{n+3}(S_{r_1}^1\times S_{r_2}^1\times S_{r_3}^1\times B^n_{R}))=v$.

\begin{Lemma}
	\label{kvol2}
	Suppose $\eta_m \geq v_n^{**}$. 
	If $V^{n+3}(\Omega)<\beta_n(r_3) V^1(S_{r_1}^1)V^1(S_{r_2}^1)$, then
	
	$$ I_{S_{r_1}^1\times S_{r_2}^1\times \re^{n+1}}(V^{n+3}(\Omega)) \leq V^{n+2}( \partial \Omega).$$ 
	On the other hand, if $V^{n+3}(\Omega)\geq \beta_n(r_3) V^1(S_{r_1}^1)V^1(S_{r_2}^1)$, then 
		\begin{equation}
	\label{kf24}
	 V^{n+2}( \partial \Omega)=g_n(V^{n+3}(\Omega)) .
	 	\end{equation}
\end{Lemma}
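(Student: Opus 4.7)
The plan is to carry out a two-stage symmetrization analogous to the one in Lemma \ref{vol2}, first using the inductive hypothesis on $S^1_{r_1}\times S^1_{r_2}\times\re^n$ to reduce the slices transversal to $S^1_{r_3}$, then applying the explicit profile of $S^1_{r_3}\times\re^n$ from equation (\ref{Is1}) to the resulting product region.

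For the first stage, since $F(t)\geq\eta_m\geq v_n^{**}$, the hypothesis of Theorem \ref{T3} implies that an isoperimetric region in $S^1_{r_1}\times S^1_{r_2}\times\re^n$ of volume $F(t)$ has the form $S^1_{r_1}\times S^1_{r_2}\times B^n_{R(t)}$, with $R(t)>0$ determined by $V^1(S^1_{r_1})V^1(S^1_{r_2})V^n(B^n_{R(t)})=F(t)$. Replace each slice $\Omega_t$ by this product and argue exactly as in the first symmetrization of Lemma \ref{vol2} (the Ros product Theorem applied with base $S^1_{r_3}$ and fiber $S^1_{r_1}\times S^1_{r_2}\times\re^n$) to obtain a closed region
\[\Omega^*=S^1_{r_1}\times S^1_{r_2}\times\tilde\Omega\subset T^3\times\re^n,\]
where $\tilde\Omega=\{(t,x)\in S^1_{r_3}\times\re^n:|x|\leq R(t)\}$. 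By construction $V^{n+3}(\Omega^*)=V^{n+3}(\Omega)$, $V^{n+2}(\partial\Omega^*)\leq V^{n+2}(\partial\Omega)$, $V^{n+1}(\tilde\Omega)=V^{n+3}(\Omega)/(V^1(S^1_{r_1})V^1(S^1_{r_2}))$, and $V^{n+2}(\partial\Omega^*)=V^1(S^1_{r_1})V^1(S^1_{r_2})V^n(\partial\tilde\Omega)$.

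For the second stage, apply the isoperimetric inequality in $S^1_{r_3}\times\re^n$ to $\tilde\Omega$. If $V^{n+3}(\Omega)<\beta_n(r_3)V^1(S^1_{r_1})V^1(S^1_{r_2})$, then $V^{n+1}(\tilde\Omega)<\beta_n(r_3)$ and equation (\ref{Is1}) yields $V^n(\partial\tilde\Omega)\geq I_{\re^{n+1}}(V^{n+1}(\tilde\Omega))=V^n(\partial B^{n+1}_{R_0})$, where $R_0$ is chosen so that $V^{n+1}(B^{n+1}_{R_0})=V^{n+1}(\tilde\Omega)$. Since $\hat\Omega:=S^1_{r_1}\times S^1_{r_2}\times B^{n+1}_{R_0}$ is a closed competitor of volume $V^{n+3}(\Omega)$ in $S^1_{r_1}\times S^1_{r_2}\times\re^{n+1}$,
\[I_{S^1_{r_1}\times S^1_{r_2}\times\re^{n+1}}(V^{n+3}(\Omega))\leq V^{n+2}(\partial\hat\Omega)\leq V^{n+2}(\partial\Omega^*)\leq V^{n+2}(\partial\Omega).\]
If instead $V^{n+3}(\Omega)\geq\beta_n(r_3)V^1(S^1_{r_1})V^1(S^1_{r_2})$, then $V^{n+1}(\tilde\Omega)\geq\beta_n(r_3)$ and equation (\ref{Is1}) yields $V^n(\partial\tilde\Omega)\geq V^1(S^1_{r_3})V^{n-1}(\partial B^n_{R_0})$, with $R_0$ determined by $V^1(S^1_{r_3})V^n(B^n_{R_0})=V^{n+1}(\tilde\Omega)$. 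Then $T^3\times B^n_{R_0}$ is a closed competitor of volume $V^{n+3}(\Omega)$ in $T^3\times\re^n$, and
\[g_n(V^{n+3}(\Omega))=V^{n+2}(\partial(T^3\times B^n_{R_0}))\leq V^{n+2}(\partial\Omega^*)\leq V^{n+2}(\partial\Omega),\]
while the reverse inequality $V^{n+2}(\partial\Omega)\leq g_n(V^{n+3}(\Omega))$ follows from $\Omega$ being isoperimetric, giving equality.

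The main obstacle is to verify in the first stage that the slicewise improvement $V^{n+1}(\partial\Omega^*_t)\leq V^{n+1}(\partial\Omega_t)$ propagates to the global inequality $V^{n+2}(\partial\Omega^*)\leq V^{n+2}(\partial\Omega)$ in the three-factor product setting; this is, however, a direct transcription of the Ros product Theorem argument used in Lemma \ref{vol2}, with one additional circle factor attached.
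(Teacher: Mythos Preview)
Your proof is correct and follows essentially the same two-stage symmetrization as the paper. The only difference is cosmetic: in the second stage the paper re-slices $\Omega^*$ along $p\in S^1_{r_1}\times S^1_{r_2}$, observes that the slice volume is constant in $p$, and then applies a second Ros-type replacement, whereas you exploit directly that $\Omega^*=S^1_{r_1}\times S^1_{r_2}\times\tilde\Omega$ is already a product and apply the isoperimetric inequality of $S^1_{r_3}\times\re^n$ to $\tilde\Omega$ --- this is the same computation written more compactly.
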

\begin{proof}

	We construct a new region $\Omega^*$. We denote $\Omega^* \cap(  S^1_{r_1}\times S^1_{r_2}\times \{t\} \times  \re^n)$ by $\Omega^*_{t}$. Let 
	$\Omega_t^*=\{t\}\times S^1_{r_1}\times S^1_{r_2}\times B^n_{R(t)}$, with $R(t)$ such that $V^{n+2}(S^1_{r_1}\times S^1_{r_2}\times B^n_{R(t)})=V^{n+2}(\Omega_t)$.
	Since 
	$\eta_m > v^{**}_n$, regions of the type $S^1_{r_1}\times S^1_{r_2}\times B^n_{R(t)}$ are isoperimetric in $S^1_{r_1}\times S^1_{r_2}\times \re^n$. 
	Arguing as in the proof of the Ros product Theorem (\cite{Ros}), we get
    $V^{n+3}(\Omega^*)= V^{n+3}(\Omega)$ and $V^{n+2}(\partial \Omega^*)\leq V^{n+2}(\partial \Omega).$

	We now symmetrize $\Omega^*$. 
	Let  $(\Omega^*)^p = \Omega^* \cap (\{p\} \times S^1_{r_3} \times \re^n)$ where $p \in S^1_{r_1}\times S^1_{r_2}$. 
	
	Let $p, q \in S^1_{r_1}\times S^1_{r_2}$. 
	Note that $(\Omega^*)^p = \bigcup_{t \in S^1_{r_3}} \left ( \Omega_t^* \cap ( \{ p \} \times \{ t \} \times B^n_{R(t) })\right )$ and $(\Omega^*)^q = \bigcup_{t \in S^1_{r_3}} \left ( \Omega_t^* \cap ( \{ q \} \times \{ t \} \times B^n_{R(t)} )\right )$. Since $R(t)$ is the same on both slices we get,
	\begin{equation}
	\label{equalityvol1}
	V^{n+1}((\Omega^*)^p) = V^{n+1}((\Omega^*)^q).
	\end{equation}
	Since $p,q$ where arbitrary, this implies 
	\begin{equation}
	\label{equalityvol2}
	V^{n+3}(\Omega) = V^{n+1}((\Omega_p) V^1 (S_{r_1}) V^1 (S_{r_2}).
	\end{equation}

	If $V^{n+3}(\Omega)<\beta_n(r_3) V^1(S_{r_1}^1)V^1(S_{r_2}^1)$,
	then by eq. (\ref{equalityvol2}), $V^{n+1}((\Omega^*)^p) < \beta_n(r_3)$
	and hence balls $B^{n+1}_{R}$ are isoperimetric regions in $S^1_{r_3} \times \re^n$.
	
	We construct a new region $\Omega^{**}$ such that
	$(\Omega^{**})^p = \{ p \} \times B^{n+1}_{R}$, $p \in S_{r_1}^1 \times S_{r_2}^1$ with $R$ such that $V^{n+1} (B^{n+1}_{R}) = V^{n+1} ((\Omega^*)^p)$ (note that $R$ is independent of $p$, by eq. (\ref{equalityvol1})).
	
	Arguing as in the Ros Product Theorem, we then have $V^{n+3}(\Omega^{**}) = V^{n+3}(\Omega^{*})= V^{n+3}(\Omega)$ and $V^{n+2}(\partial \Omega^{**}) \leq V^{n+2}(\partial \Omega^{*}) \leq V^{n+2}(\partial \Omega)$. This implies the first part of the Lemma: 
	$$ I_{S_{r_1}^1\times S_{r_2}^1\times \re^{n+1}}(V^{n+3}(\Omega)) \leq V^{n+2}( \partial \Omega).$$ 
		
	On the other hand, if $V^{n+3}(\Omega)\geq\beta_n(r_3) V^1(S_{r_1}^1)V^1(S_{r_1}^1)$
	then by eq. (\ref{equalityvol2}), $V^{n+1}((\Omega^*)^p) \geq \beta_n(r_3)$
	and hence $S^1_{r_3}\times B^{n}_{R}$ are isoperimetric regions in $S^1_{r_3} \times \re^n$.
	
	We then construct a new region $\Omega^{**}$ such that
	$(\Omega^{**})^p = \{ p \} \times S^1_{r_3}\times B^{n}_{R}$, $p \in S_{r_1}^1 \times S_{r_2}^1$, with $R$ such that $V^{n+1} ( S^1_{r_3}\times B^{n}_{R}) = V^{n+1} ((\Omega^*)^p)$ ($R$ is independent of $p$, by eq. (\ref{equalityvol1})).
	
	Arguing as in the Ros Product Theorem, we get $V^{n+3}(\Omega^{**}) = V^{n+3}(\Omega^{*})= V^{n+3}(\Omega)$ and $V^{n+2}(\partial \Omega^{**}) \leq V^{n+2}(\partial \Omega^{*}) \leq V^{n+2}(\partial \Omega)$. This implies 
	$ g_n (V^{n+3}( \Omega))\leq V^{n+2}( \partial \Omega).$
	
	Being $\Omega$ isoperimetric, we conclude that $g_n (V^{n+3}( \Omega))= V^{n+2}( \partial \Omega).$

	
\end{proof}

We now prove that the case $\eta_M>w_n^*$ cannot occur for small areas of $\Omega$.

\begin{Lemma}
	\label{tm4} Suppose that $\eta_M>w_n^*$.
	
	Then there is some $C^*>0$ such that  
	$$V^{n+2}(\partial \Omega) > C^*.$$
	\noindent $C^*$  is independent of $\Omega$. In fact, it depends only on $r_1,r_2,r_3,n$ and is given by $$C^* = 2(w_n^* - \eta^*)$$
	where $\eta^* > 0$ satisfies
	\begin{equation}
	\label{eta1}
	\frac{1}{2} V(S^1_{r_3})I_{\re^{n+2}}(\eta^*)+ \eta^* = w_n^*
	\end{equation}
	
\end{Lemma}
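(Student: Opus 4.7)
I would mirror the argument of Lemma \ref{tm}, now applied to the product decomposition $(S^1_{r_3}, dt^2) \times (S^1_{r_1} \times S^1_{r_2} \times \re^n, h_2 + g_E)$, in order to produce a two-dimensional Morgan model whose perimeter estimates control $V^{n+2}(\partial \Omega)$ from below. Setting $V_3 := V^1(S^1_{r_3})$ and $h(x) := I_{S^1_{r_1} \times S^1_{r_2} \times \re^n}(x)$, I would work on $(0, V_3) \times (0, \infty)$ with the model metric $ds^2 = h(v_2)^2\, dv_1^2 + 4\, dv_2^2$. Exactly as in Lemma \ref{tm}, this is a model metric in the sense of Ros: the factor $\tfrac{1}{2}$ on $(0,V_3)$ is constant, so intervals $(0,t)$ minimize perimeter there, while $h$ is nondecreasing (isoperimetric profiles of infinite-volume manifolds are nondecreasing), so $(0,t)$ also minimizes perimeter on $(0,\infty)$.

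The Ros product theorem then produces a region $E$ in this model with $A(E) = V^{n+3}(\Omega)$ and boundary curve $\delta E$ along which $v_2$ is nonincreasing and $v_1$ nondecreasing, satisfying $V^{n+2}(\partial \Omega) \geq P(E)$, where
\[
P(E) = \int_{\delta E} \sqrt{h(v_2)^2\, dv_1^2 + 4\, dv_2^2}.
\]
From this, I would extract the same two coordinate bounds exploited in Lemma \ref{tm}. The hypothesis $\eta_M > w_n^*$, together with $\eta_m \leq w_n^*$ (the complementary regime falling under Lemmas \ref{kvol1} and \ref{kvol2}), forces $\delta E$ to cross the horizontal line $v_2 = w_n^*$, so $P(E) \geq 2\int_{\delta E} |dv_2| \geq 2(w_n^* - \eta_m)$. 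Monotonicity of $h$ with $\int_{\delta E} dv_1 = V_3$ gives $P(E) \geq V_3 \, h(\eta_m)$; and since $\eta_m < w_n^* \leq \min\{v_n^*,\beta_{n+1}(r_1)\}$, equation (\ref{w}) applied at $\eta_m$ yields $h(\eta_m) = I_{\re^{n+2}}(\eta_m)$. Combining,
\[
V^{n+2}(\partial \Omega) \;\geq\; \max\bigl\{\,2(w_n^* - \eta_m),\; V_3\, I_{\re^{n+2}}(\eta_m)\,\bigr\}.
\]

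The conclusion follows from the same elementary convexity-type observation as in Lemma \ref{tm}: on $(0,w_n^*)$ the first bracket is strictly decreasing from $2 w_n^*$ to $0$, the second strictly increasing from $0$, so they meet at a unique $\eta^*\in (0,w_n^*)$, which is precisely the unique solution of (\ref{eta1}) (uniqueness following the argument of Remark \ref{solution} for equations of the form $a\, x^{(n+1)/(n+2)} + x = b$). Since the pointwise maximum of a decreasing and an increasing function is minimized at their crossing, the right-hand side above is at least $2(w_n^* - \eta^*) = C^*$, independent of $\eta_m$, proving the claim. The main point requiring care is verifying that $h = I_{S^1_{r_1}\times S^1_{r_2}\times \re^n}$ is legitimately used as a model profile, which reduces to its monotonicity; a secondary point is justifying $\eta_m < w_n^*$, since otherwise $\Omega$ sits in the regime already covered by Lemmas \ref{kvol1} and \ref{kvol2} and the present estimate is not needed.
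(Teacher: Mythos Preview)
Your proposal is correct and is precisely what the paper intends: its own proof of this lemma consists of the single sentence ``The proof is similar to that of lemma \ref{tm},'' and you have faithfully carried out that transposition to the decomposition $S^1_{r_3}\times(S^1_{r_1}\times S^1_{r_2}\times\re^n)$ with $h=I_{S^1_{r_1}\times S^1_{r_2}\times\re^n}$, including the Morgan two--dimensional model, the two coordinate lower bounds, and the crossing argument at $\eta^*$. One minor remark: your side comment that the regime $\eta_m\geq w_n^*$ ``falls under Lemmas \ref{kvol1} and \ref{kvol2}'' is imprecise (those lemmas treat $\eta_M\leq w_n^*$ and $\eta_m\geq v_n^{**}$ respectively), but this is harmless for the lemma itself, since in that case monotonicity of $h$ already gives $P(E)\geq V_3\,h(\eta_m)\geq V_3\,h(\eta^*)=V_3\,I_{\re^{n+2}}(\eta^*)=C^*$ directly, so no case split on $\eta_m$ is actually required.
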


\begin{proof}

	
	The proof is similar to that of lemma \ref{tm}.
\end{proof}
\begin{Remark}
	By remark \ref{solution}, a solution to equation (\ref{eta1}) exists and is  unique. 
\end{Remark}

We now prove a lower bound  for the area of the region $\Omega$, $V^{n+2}(\partial \Omega)$, for  the case  $\eta_m< v_n^{**}$.
\begin{Lemma}
	\label{k4}
	
	Suppose  $0<\eta_m< v_n^{**}$. Then
	
	\begin{equation}
	\label{k7a4}
	I_{S^1_{r_1}\times S^1_{r_2} \times \re^{n+1}}(V^{n+3}( \Omega))- 2 v_n^{**} < V^{n+2}(\partial \Omega)
	\end{equation}
\end{Lemma}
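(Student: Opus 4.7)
The plan is to mirror the symmetrization argument of Lemma \ref{4}, but now applied to the $S^1_{r_3}$ factor of $T^3 \times \re^n$. I would start by picking $t_m \in [0, 2\pi r_3)$ with $F(t_m) = \eta_m > 0$, and then open up the circle $S^1_{r_3}$ at $t_m$ into an interval $[t_m, t_m + 2\pi r_3]$, constructing a new region
$$\Omega^* \subset S^1_{r_1} \times S^1_{r_2} \times [t_m, t_m + 2\pi r_3] \times \re^n \subset S^1_{r_1} \times S^1_{r_2} \times \re^{n+1}$$
by setting $\Omega^*_{t_m + t} := \Omega_t$ for $t \in (0, 2\pi r_3)$ and placing a copy of $\Omega_{t_m}$ at each of the two endpoints $t_m$ and $t_m + 2\pi r_3$.

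The next step is to read off the volume and boundary measure of $\Omega^*$. By continuity of $F$, the region $\Omega^*$ is closed in $S^1_{r_1} \times S^1_{r_2} \times \re^{n+1}$. Since the two added endpoint slices have $(n+3)$-measure zero, we get $V^{n+3}(\Omega^*) = V^{n+3}(\Omega)$. On the other hand, the two inserted copies of $\Omega_{t_m}$ become part of $\partial \Omega^*$, contributing $2\,V^{n+2}(\Omega_{t_m}) = 2\eta_m$ of new boundary, so $V^{n+2}(\partial \Omega^*) = V^{n+2}(\partial \Omega) + 2\eta_m$.

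Finally, applying the definition of the isoperimetric profile to the closed region $\Omega^* \subset S^1_{r_1} \times S^1_{r_2} \times \re^{n+1}$ gives
$$I_{S^1_{r_1}\times S^1_{r_2}\times \re^{n+1}}(V^{n+3}(\Omega)) \leq V^{n+2}(\partial \Omega^*) = V^{n+2}(\partial \Omega) + 2\eta_m.$$
Using the hypothesis $\eta_m < v_n^{**}$, we have $2\eta_m < 2 v_n^{**}$, so rearranging yields the claimed strict inequality
$$I_{S^1_{r_1}\times S^1_{r_2}\times \re^{n+1}}(V^{n+3}(\Omega)) - 2 v_n^{**} < V^{n+2}(\partial \Omega).$$
There is no substantive obstacle; the only point requiring care is verifying that $\Omega^*$ is genuinely a closed region in the target product manifold, which follows from continuity of $F$ together with the fact that the two endpoint slices agree.
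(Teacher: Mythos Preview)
Your proposal is correct and follows essentially the same approach as the paper's proof: both open the $S^1_{r_3}$ factor at a point $t_m$ realizing the minimum slice volume $\eta_m$, embed $\Omega$ as a closed region $\Omega^*$ in $S^1_{r_1}\times S^1_{r_2}\times \re^{n+1}$ at the cost of $2\eta_m$ extra boundary, and then apply the isoperimetric inequality together with $\eta_m < v_n^{**}$.
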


\begin{proof}
	We construct a new closed region $\Omega^* \subset S_{r_1}^1\times S_{r_2}^1\times  [t_m, t_{m+2 \pi r_3}] \times \re^{n}\subset S_{r_1}^1\times S_{r_2}^1\times \re \times \re^{n}$, with $t_m$ such that $F(t_m)= \eta_m >0$, in the following way. We denote $\Omega^* \cap(  S^1_{r_1}\times S^1_{r_2}\times \{t\} \times  \re^n)$ by $\Omega^*_{t}$.  For $t \in (0, 2 \pi r_3)$ let $\Omega^*_{t_m+t} = \Omega_{t}$. Also, let $\Omega^*_{t_m+2 \pi r_3} = \Omega_{t_m}$ and  $\Omega^*_{t_m} =\Omega_{t_m}$.
	
	Note that  $V^{n+3}( \Omega^*)=V^{n+3}( \Omega)$ and $V^{n+2}( \partial \Omega^*)=V^{n+2}(\partial \Omega)+ 2 V^{n+2}(\Omega_{t_m})=V^{n+2}(\partial \Omega)+ 2  \eta_m $.
	 Since $\Omega^*$ is actually a closed set in $S_{r_1}^1\times S_{r_2}^1\times  \re \times \re^{n}$, it follows that 
	$$I_{S_{r_1}^1\times S_{r_2}^1\times \re^{n+1}}(V^{n+3}(\Omega))\leq V^{n+2}(\partial \Omega^*)=V^{n+2}(\partial \Omega)+ 2   \eta_m $$
	
	Since $   \eta_m <  v_n^{**}$, we have eq. (\ref{k7a4}).
	
\end{proof}

We now prove Theorem \ref{T3}.

\begin{proof}
	
	Let $\Omega \subset S^1_{r_1} \times S^1_{r_2}\times S^1_{r_3}  \times \re^n$ be an isoperimetric region. Consider the functions $F$, $g_n$ and the values $\eta_m, \eta_M$ as before. 
	
	We begin with the case of big volumes. By Lemma \ref{algebraic} and Remark \ref{algebraic2}, there exists $u_n^{**}$ such that for $v> u_n^{**}$ 
	$$I_{S^1_{r_1} \times S^1_{r_2} \times \re^{n+1}}(v)- 2 v_n^{**} > g_n(v).$$
	
	Being $\Omega$ isoperimetric, we have $V^{n+1}(\partial \Omega) \leq g_n(v)$.
	Hence, if $V^{n+3}(\Omega) > u_n^{**}$,
	\begin{equation}
	\label{eq101}
	I_{S^1_{r_1} \times S^1_{r_2} \times \re^{n+1}}(V^{n+3}(\Omega))- 2 v_n^{**} \geq V^{n+2}(\partial \Omega).
	\end{equation}
	
	Hence, eq. (\ref{eq101}) and Lemma \ref{k4} prevents $\eta_m < v_n^{**}$ from happening if $V^{n+3}(\Omega) > u_n^{**}$. This implies that for $V^{n+3}(\Omega) > u_n^{**}$, $\eta_m \geq v_n^{**}$ and by Lemma \ref{kvol2}, that $V^{n+2}(\partial \Omega)=g_n(V^{n+3}(\Omega)).$
	
	We now treat the case of small volumes.
	
	Suppose $V^{n+2}(\partial \Omega)<C^*$. 
	By Lemma \ref{tm4}, $\eta_{m}<w_n^*$. 	This implies, by Lemma \ref{kvol0} and \ref{kvol1}, that 
	$$\min \{ I_{S^1_{r_1}\times S^1_{r_2}\times \re^{n+1}(v)}, I_{S^1_{r_3} \times \re^{n+2}}(v) \} \leq I_{S^1_{r_1}\times S^1_{r_2}\times S^1_{r_3} \times \re^{n}}(v).$$
	On the other hand, $I_{S^1_{r_1} \times \re^{n+2}}(v) \leq I_{S^1_{r_3}\times \re^{n+2}}(v)$ since $r_1 \leq r_3$ and $I_{S^1_{r_1} \times \re^{n+2}}(v) \leq I_{S^1_{r_1}\times S^1_{r_2} \times \re^{n+1}}(v)$, if $v \leq v_{n+1}^*$.

	Hence, if $V^{n+3}(\Omega)\leq v_{n+1}^*$ and $V^{n+2}(\partial \Omega)<C^*$, we have
	$$I_{S^1_{r_1} \times \re^{n+2}}(V^{n+3}(\Omega)) \leq I_{S^1_{r_1}\times S^1_{r_2}\times S^1_{r_3} \times \re^{n}}(V^{n+3}(\Omega)).$$
	
	Note also that if $v<V^1(S^1_{r_1})V^n(B_{r_2})$, then isoperimetric regions in $S^1_{r_1}\times \re^{n+2}$ are realizable in $S^1_{r_1}\times S^1_{r_2}\times S^1_{r_3} \times \re^{n}$. Hence, in this case,

$$ 	I_{S^1_{r_1} \times \re^{n+2}}(V^{n+3}(\Omega)) \geq I_{S^1_{r_1}\times S^1_{r_2}\times S^1_{r_3} \times \re^{n}}(V^{n+3}(\Omega)).$$
	
	Let $u_0>0$ be such that $I_{S^1_{r_1}\times \re^{n+2}}(u_0)=C^*$ and $u_n^*=\min\{u_0,v_{n+1}^*, V^1(S^1_{r_1})\  V^{n+2}(B^{n+2}_{r_2}))\}$
		
		We conclude that if $V^{n+3}(\Omega)\leq u^*_n$,
		$$ 	I_{S^1_{r_1} \times \re^{n+2}}(V^{n+3}(\Omega)) =I_{S^1_{r_1}\times S^1_{r_2}\times S^1_{r_3} \times \re^{n}}(V^{n+3}(\Omega)).$$
		
\end{proof}

\begin{thebibliography}{aa}
	\bibitem{Almgren} 	F. J. Almgren, {\it Spherical symmetrization}, Proc. International workshop on integral functions
	in the calculus of variations, Trieste 1985, Red. Circ. Mat. Palermo (2) Supple. (1987), 11-25.
	\bibitem{Bayle}	V. Bayle, {\it Propri\'et\'es du concavit\'e du profil isop\'erim\'etrique et applications}. Ph.D. Thesis, p. 52 (2004)
	\bibitem{Gonzalez} E. Gonzalez, U. Massari and I. Tamanini,  {\it On the regularity of boundaries of sets minimizing
perimeter with a volume constraint}, Indiana Univ. Math. J. 32 (1983), 25-37.
	\bibitem{Gonzalo} J. Gonzalo, {\it Large soap bubbles and isoperimetric regions in the product of Euclidean space with closed manifold.} Ph. D. Thesis, University of California, Berkeley.
\bibitem{Gruter}	M. Gr\"uter, {\it Boundary regularity for solutions of a partitioning problem}, Arch. Rat. Mech. Anal. 97 (1987), 261-270.
	\bibitem{Hauswirth} L. Hauswirth, J. P\'erez,  A. Ros. {\it The periodic isoperimetric problem}, Transactions of the Amer. Math. Soc., Providence. Volume 356, number 5, (2003), 2025-2047.
\bibitem{Hsiang1} W. Y. Hsiang,  {\it A symmetry theorem on isoperimetric regions}, PAM-409 (1988), UC Berkeley.
\bibitem{Hsiang2} W. Y. Hsiang,  {\it Isoperimetric regions and soap bubbles}, Proceedings conference in honor to Manfredo do	Carmo, Pitman survey in pure and. appl. math . 52 (1991), 229-240.
	
\bibitem{MorganBook}	F. Morgan, {\it Geometric measure theory: a beginner’s guide}, 2nd ed., Academic Press, 1998.
	\bibitem{Morgan} F. Morgan. {\it Isoperimetric estimates in products}, Ann. Glob. Anal. Geom. {\bf30} (2006) , 73-79. 
	\bibitem{Morgan2} F. Morgan. {\it Isoperimetric balls in cones over tori}, {\bf35}	Ann. Glob. Anal. Geom.  (2009)  133–137
	
	\bibitem{Morgan3} F.	Morgan, M. Ritoré,  {\it Isoperimetric regions in cones}. Trans. Am. Math. Soc. 354,  (2002) 2327–2339
		\bibitem{Pedrosa}  R.  Pedrosa, {\it The isoperimetric problem in spherical cylinders}. Ann. Global Anal. Geom. {\bf26} (2004), 333-354.

	\bibitem{Pedrosa2} R. Pedrosa, M. Ritor\'{e}, {\it Isoperimetric domains in the Riemannian product of a circle		with a simply connected space form and applications to free boundary problems}, Indiana Univ. Math. J.
	{\bf 48} (1999), 1357-1394. 
	\bibitem{Petean0} J. Petean,{\it { Isoperimetric regions in spherical cones and Yamabe constants of $M \times S^1$}}. Geom Dedicata 143, 37 (2009) 37-48
		\bibitem{Petean1} J. Petean and J. M. Ruiz, {\it {Isoperimetric profile comparisons and Yamabe constants}}. Annals of Global Analysis. Vol. 40, No. 2, 177-189. (2011).
	\bibitem{Petean3}J. Petean and J. M. Ruiz, {\it On the Yamabe constants of
		$S^3\times \re^2$ and $S^3\times \re^2$},	Differential Geometry and its Applications
		 31 (2), (2013) 308-319
	\bibitem{Petean2} J. Petean and J. M. Ruiz, {\it {Minimal hypersurfaces in $\re^n\times S^m$}}, Advances in geometry Vol. 19 (1) (2019) 1-13.
	\bibitem{Ros} A. Ros. {\it The isoperimetric problem}, Global Theory of Minimal Surfaces (Proc. Clay Math. Institute	Summer School, 2001). Amer. Math. Soc., Providence. (2005)
	
	
	



	




\end{thebibliography}

\begin{thebibliography}{aa}

\bibitem{Botvinnik} K. Akutagawa, B. Botvinnik,
{\it Yamabe metrics on cylindrical manifolds},
Geom. Funct. Anal. {\bf 13} (2003), 259-333.

\bibitem{Akutagawa} K. Akutagawa, L. Florit, J. Petean, {\it On Yamabe constants of Riemannian
products}, Comm. Anal. Geom. {\bf 15} (2007), 947-969. 

\bibitem{Ammann} B. Ammann, M. Dahl, E. Humbert, {\it Smooth Yamabe invariant and surgery}, 
J. Differential Geometry {\bf 94} (2013), 1-58. 


\bibitem{Aubin} T. Aubin, {\it Equations differentielles non-lineaires et
probleme de Yamabe concernant la courbure scalaire},
J. Math. Pures Appl. {\bf 55} (1976), 269-296.

\bibitem{Gidas} B. Gidas, W. M.  NI, L. Nirenberg, {\it Symmetry and related properties via the
maximum principle}, Comm. Math. Phys. {\bf 68}  (1979), 209-243.


\bibitem{Ni}  B. Gidas, W. M.  NI, L. Nirenberg, {\it Symmetry of positive solutions of nonlinear
elliptic equations in $\re^n$}, Advances in Math. Studies 7 A (1981), 369-402.




\bibitem{Grose} N. Gro$\beta$e, M. Nardmann, {\it The Yamabe constant of noncompact manifolds}, J. Geom. Anal. \textbf{24(2)} (2014), 1092-1125.


\bibitem{Hebey} E. Hebey, {\it Sobolev spaces on Riemannian manifolds}, Lecture Notes in Mathematics 1635, Springer-Verlag, 
Berlin, 1996. 



\bibitem{Ince} E. L. Ince, {\it Ordinary differential equations}, Dover Publications, New York, 1956.

\bibitem{Kwong} M. K. Kwong, {\it Uniqueness os positive solutions of $\Delta u -u +u^p =0$ in $\re^n$}, Arch. Rational
Mech. Anal. {\bf 105} (1989), 243-266. 



\bibitem{parker} T. H. Parker and J. M.  Lee, {\it The Yamabe Problem} Bull. of the Amer. Math. Soc. {\bf 17}, Number 1, (1987), 37-91.

\bibitem{Strauss}W. A. Strauss, {\it Existence of solitary waves in higher dimensions},
Comm. Math. Phys. {\bf 55} (1977), 149-162. 

\bibitem{Schoen} R. Schoen, S. T. Yau, {\it Conformally flat manifolds, Kleinian groups and scalar curvature}, 
Invent. Math. {\bf 92} (1988), 47-71. 




D.Gilbarg,N.S.Trudinger,EllipticPartialDifferentialEquationsofSecondOrder,seconded.,Springer-Verlag,Berlin,1983.



\end{thebibliography}
\end{document}